\documentclass[11pt]{article}
\usepackage[english]{babel}
\usepackage[T1]{fontenc}
\usepackage[utf8]{inputenc}
\usepackage{amsmath}
\usepackage{amssymb}
\usepackage{amsfonts}
\usepackage{amsthm}
\usepackage{bm}
\usepackage{mathtools}
\usepackage{ascmac}
\usepackage{framed}
\usepackage{ulem}

\usepackage{color}								% You may remove in the future manuscript.
\definecolor{lightgray}{rgb}{0.75,0.75,0.75}

\paperheight=29.7cm
\paperwidth=21cm
\setlength\textwidth{16cm}
\hoffset=-1in
\setlength\marginparsep{0cm}
\setlength\marginparwidth{1cm}
\setlength\marginparpush{0cm}
\setlength\evensidemargin{2.5cm}
\setlength\oddsidemargin{2.5cm}
\setlength\topmargin{2.5cm}
\setlength\headheight{0cm}
\setlength\headsep{0cm}
\voffset=-1in
\setlength\textheight{24cm}
\setlength{\parindent}{0,2cm}
\setlength{\parskip}{1.5mm}

 {\endMakeFramed}

\newcommand{\eps}{\varepsilon}

\newcommand{\R}{\mathbb{R}}
\renewcommand{\S}{\mathbb{S}}

\newcommand{\boD}{\mathcal{D}}

\newcommand{\boH}{\mathcal{H}}
\newcommand{\boI}{\mathcal{I}}

\newcommand{\boL}{\mathcal{L}}
\newcommand{\boM}{\mathcal{M}}

\newcommand{\la}{\ensuremath{\lambda}}
\newcommand{\La}{\ensuremath{\Lambda}}
\newcommand{\te}{\ensuremath{\theta}}
\newcommand{\al}{\ensuremath{\alpha}}
\newcommand{\be}{\ensuremath{\beta}}

\newcommand{\loc}{\mathrm{loc}}

\newtheorem*{claim*}{Claim}

\newtheorem*{cor*}{Corollary}
\newtheorem{lem}{Lemma}
\newtheorem{prop}{Proposition}

\newtheorem{thm}{Theorem}

\theoremstyle{definition}

\newtheorem{remark}{Remark}

\theoremstyle{remark}

%%%%%%%%%%%%%%%%%%%%%%%%%%%%%%%%%
% Ikkei Put New Command
%%%%%%%%%%%%%%%%%%%%%%%%%%%%%%%%%
%%%%%%%%%%%%%%%%%%%%%%%%%%%%%%%%%

\makeatletter												%
							%	when sections change
  \@addtoreset{equation}{section}						%
\makeatother

\newcommand{\nor}[2]{\left\| {#1} \right\|_{#2}}		%	Norm
			%	Japanese blacket
					%	Complex conjugate
		
\newcommand{\inp}[2]{\langle {#1} , {#2} \rangle }	%	inner product

\newcommand{\Del}{{\Delta}}							%	Laplacian
								%	Small delta
\newcommand{\rd}{{\partial}}								%	Round d
\newcommand{\nab}{{\nabla}}							%	Nabla

\newcommand{\bb}{{\mathbf{b}}}
\newcommand{\bh}{{\mathbf{h}}}
\newcommand{\bn}{{\mathbf{n}}}
\newcommand{\boe}{{\mathbf{e}}}
\newcommand{\bom}{{\mathbf{m}}}
\newcommand{\bphi}{{\bm{\phi}}}

\newcommand{\bxi}{{\bm{\xi}}}
\newcommand{\bJ}{{\mathbf{J}}}

%%%%%%%%%%%%%%%%%%%%%%%%%%%%%%%%%%
%%%%%%%%%%%%%%%%%%%%%%%%%%%%%%%%%%
%%%%%%%%%%%%%%%%%%%%%%%%%%%%%%%%%%

%\setcounter{section}{13}

\begin{document}

\title{
%Sharp phase transition and stability of magnetic skyrmions
Phase transition threshold and stability of magnetic skyrmions
%\footnote{
%2020 Mathematics Subject Classification: \textcolor{red}{xxxx}
%}
}
\author{Slim Ibrahim, Ikkei Shimizu}
\date{}
\maketitle

\begin{abstract}
%{\color{blue}
We examine the stability of 
vortex-like configuration of magnetization in magnetic materials, so-called the magnetic skyrmion. 
These correspond to critical points
of the Landau-Lifshitz energy with the Dzyaloshinskii-Moriya (DM) interactions. 
In an earlier work of D\"oring and Melcher, it is known that the skyrmion is a ground state when the coefficient of the DM term is small. 
In this paper, we prove that there is an explicit critical value of the coefficient above which the skyrmion is unstable, while stable below this threshold. 
Moreover, we show that in the unstable regime, the infimum of energy is not bounded below, by giving an explicit counterexample with a sort of helical configuration. 
This mathematically explains the occurrence of phase transition observed in some experiments. \\
\textbf{Keywords}: skyrmion, Landau-Lifshitz energy, Dzyaroshinskii-Moriya interaction, phase transition
\end{abstract}

%{\color{blue}
\section{Introduction}\label{S1}

We consider the Landau-Lifshitz energy functional of the form 
\begin{equation}\label{e1}
E_p [\bn] = D [\bn] + r H [\bn] %- 4\pi \Om^{\sqrt{2\eps}} [\bn]
 + V_p [\bn],\qquad \bn:\R^2\to\S^2
\end{equation}
where $r>0$, $p\ge 2$ are constants, and %the exchange energy:
$$
D[\bn] := \frac 12 \int_{\R^2} |\nab \bn|^2 dx;\qquad \text{Dirichlet energy},
$$
$$
H[\bn] := \int_{\R^2} (\bn -\boe_3) \cdot \nab \times \bn dx;\qquad \text{helicity},
$$
$$
V_p [\bn] := \frac{1}{2^{p-1}} \int_{\R^2} |\bn - \boe_3|^p dx = \frac{1}{2^{p/2-1}} \int_{\R^2} (1-n_3)^{p/2} dx;\qquad \text{potential energy}.
$$
%First we point out that 
%Here the 
%$$
%\nab \bn = 
%%\begin{equation}
%\left(
%\rd_2\bn , -\rd_1 \bn , \rd_1 n_2 - \rd_2 n_1
%\right)
%%\end{equation}
%$$
%is defined as the 
First we point out that $\nab \times \bn$ can be defined as 
$$
\nab \times \bn = 
\begin{pmatrix}
\rd_1 \\ \rd_2 \\ 0
\end{pmatrix}
\times 
\bn,
$$
which is the there-dimensional curl acting on maps depending only on $x_1$ and $x_2$. 
The Landau-Lifshitz energy arises in micromagnetics, 
where $\bn$ represents magnetization vector in a magnetic material. 
The equilibrium state of magnetization is characterized by the stable critical points of $E_p$ (see \cite{HS} for the general theory). 
In the energy $E_p$, $D$ represents the exchange interaction, while $V_p$ arises from 
the external field and crystalline structure which produces 
the easy-axis anisotropy perpendicular to the planar material. 
$H[\bn]$ stands for the Dzyaloshinskii-Moriya interaction of magnetization, 
which emerges in some particular crystalline structure \cite{BogYab89, BogHub94}. 
When this kind of magnetic material is analyzed 
under the effect of strong external field, 
then localized vortex-like configuration of magnetization, called \textit{magnetic skyrmion}, appears. 
This has actually been observed in experiments \cite{Ye12, NagTok13}, and theoretically justified in \cite{Mel14, DorMel17, LiMel18, GusWan21}. 
On the other hand, when the effect of the external field is weak, then 
some helical shape different from skyrmion state are observed \cite{Ye12, NagTok13}. 
This suggests the occurrence of a phase transition of the equilibrium state. 
There is, however, no result on the rigorous study justifying the above phenomena so far. 
The purpose of this paper is to establish the rigorous proof of phase transition through the analysis of the Landau-Lifshitz energy.\par
In $E_p$, the parameter $r$ measures the strength of external fields; 
larger values of $r$ correspond to weaker field strength. 
Before going any further, let us explain that putting different weights on $H$ and $V$, 
the energy can be formulated through two parameters as follows:
$$
D [\bn] + \kappa H [\bn] %- 4\pi \Om^{\sqrt{2\eps}} [\bn]
 + \mu V_p [\bn].
$$
However, one can always normalize the coefficient of $V_p$ using the rescaling $\bn(x)\mapsto \bn(\sqrt{\mu} x)$, which reduces the problem to \eqref{e1} with $r=\kappa \mu^{-1/2}$.\par
%In experiments, the phase transition of equilibrium state in terms of the strength of external field is observed; 
In the present paper, we are especially interested in the case $p=4$, where the 
energy has some special structure of a nice factorization, discussed later in details. 
Define the function space of maps $\bn$ by
$$
\boM_4 := \{
 \bn :\R^2\to \S^2\ :\ D[\bn] + V_4 [\bn] <\infty
\}
$$
endowed with the metric $d(\bn, \bom) : = \nor{\bn-\bom}{L^4(\R^2)} + \nor{\nab (\bn-\bom)}{L^2(\R^2)}$ for $\bn,\bom\in \boM_4$. 
Recall that the energy functional $E_4$ is well-defined on $\boM_4$ (see \cite[Page 7]{DorMel17} for the proof). 
%, since the constraint $|\bn|^2=1$ gives
%$$
%V_4 [\bn] = \frac{1}{4} \int_{\R^2} (1-n_3)^2 dx,
%$$
%and thus $H[\bn]^2 \le 32 D[\bn] V_4[\bn]$ by H\"older's inequality. 
In addition, 
thanks to the well-known inequality of Wente \cite{Wen69}, the topological degree
$$
Q [\bn] := \frac 1{4\pi} \int_{\R^2} \bn\cdot \rd_1 \bn \times \rd_2 \bn dx
$$
is also a well-defined, integer-valued functional on $\boM_4$. 
$Q[\bn]$ represents the total number of skyrmions, 
and its sign gives an idea about their directions of rotation. 
In this paper, we will restrict ourselves to the case $Q=-1$ 
corresponding to a single skyrmion, which is known to be the most stable homotopy class (see \cite{Mel14}). 
Moreover, solutions in this class are known to enjoy additional properties.\par
%
%Moreover, this restriction gives some specific information about critical points. 
%(See \cite{W})
%Specifically we are interested in the case when %single Skyrmion case\par
%
%
%It is known that
%In the last ten years \eqref{e1} has been studied from various aspect. 
%Minimization of \eqref{e1}, including some modification of $V_4$, is intensively considered by \cite{M,DM,LM}, while 
The critical points of $E_4$ satisfy the Euler-Lagrange equation
\begin{equation}\label{e2}
-\Del \bn + 2r \nab \times \bn - (1-n_3) \boe_3 - \La (\bn) \bn =0
\end{equation}
where
$$
\La (\bn) := |\nab \bn|^2 + 2r \bn \cdot (\nab \times \bn) - (1-n_3) n_3.
$$
Then \eqref{e2} has an explicit solution:
\begin{equation}\label{e3}
\bh^{2r} (x) := \bh \left( \frac{x}{2r} \right),\qquad 
\bh (x) := \left(
\frac{-2x_2}{1+|x|^2} , \frac{2x_1}{1+|x|^2} , -\frac{1-|x|^2}{1+|x|^2}
\right).
\end{equation}
Note that $\bh^{2r}$ is a harmonic map, with $\bh^{2r}\in \boM_4$ and $Q[\bh^{2r}]=-1$. 
In the work of D\"oring and Melcher \cite{DorMel17}, it is shown that 
when $0<r\le 1$, $\bh^{2r}$ is a global minimizer of the energy;
$$
\min_{\substack{\bn\in\boM_4 \\ Q[\bn] =-1}} E_4 [\bn] = E_4 [\bh^{2r}] = 4\pi (1-2r^2).
$$
In particular, their result explains the formation of skyrmion since the $\bh^{2r}$ has the desired configuration. 
The essence of this result is that $E_4$ can be factorized as
\begin{equation}\label{e4}
E_4 [\bn] - 4\pi r^2 Q[\bn] = \frac {r^2}2 \int_{\R^2} |\boD_1^{r} \bn + \bn\times \boD^{r}_2 \bn|^2 dx + (1- r^2) D[\bn]
\end{equation}
where the helical derivatives $\boD_j^r$ are defined by
$$
\boD_j^r
:= \rd_j - \frac {1}{r} \boe_j \times \cdot.
$$
When $r\le 1$, then the minimality immediately follows from \eqref{e4} because $\bh^{2r}$ is a minimizer of $D$ with $Q=-1$, and satisfies 
$\boD_1^{r} \bh^{2r} + \bn\times \boD^{r}_2 \bh^{2r} =0$. 
However, this argument clearly breaks down when $r>1$, where even the stability of $\bh^{2r}$ has not been understood so far. \par
Our main result is the instability of $E_4$ around $\bh^{2r}$ when $r>1$.

\begin{thm}\label{T1}%[Unstability of the critical point] 
If $r>1$, then the critical point $\bh^{2r}$ of the energy $E_4$ is unstable, in the sense that %the Hessian of $E_4$ at $\bh^{2r}$ is not non-negative definite.
for any neighborhood of $\bh^{2r}$ there exists $\bn\in \boM_4$ such that $E_4[\bn] - E_4[\bh^{2r}] <0$.
\end{thm}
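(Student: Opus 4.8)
The plan is to exhibit an explicit destabilizing perturbation of $\bh^{2r}$ by exploiting the factorization \eqref{e4}. Since $Q$ is a homotopy invariant and $Q[\bh^{2r}]=-1$, for any competitor $\bn$ in the same homotopy class we have
\begin{equation*}
E_4[\bn] - E_4[\bh^{2r}] = \frac{r^2}{2}\int_{\R^2} |\boD_1^r \bn + \bn\times\boD_2^r\bn|^2\,dx + (1-r^2)\bigl(D[\bn] - D[\bh^{2r}]\bigr),
\end{equation*}
using that $\bh^{2r}$ annihilates the first integrand and saturates $D[\bh^{2r}] = 4\pi$. When $r>1$ the coefficient $1-r^2$ is negative, so it suffices to find $\bn$ arbitrarily close to $\bh^{2r}$ for which the gain $(r^2-1)(D[\bn]-D[\bh^{2r}])$ from raising the Dirichlet energy strictly dominates the cost $\frac{r^2}{2}\int |\boD_1^r\bn + \bn\times\boD_2^r\bn|^2$. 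The key asymmetry to exploit is that $\bh^{2r}$, being a degree $-1$ harmonic map, is a strict local minimizer of $D$ in a very \emph{degenerate} way: the second variation of $D$ at $\bh^{2r}$ has a large kernel (conformal and translation modes), so along those directions the Dirichlet cost is \emph{cubic or higher} in the perturbation size, whereas the helical term generically contributes at \emph{quadratic} order — but one can arrange the perturbation so that even the quadratic part of the helical term is beaten.

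The concrete construction I would try: take a one-parameter family $\bn_\eps$ obtained by composing $\bh^{2r}$ with a small dilation or, more promisingly, by a "half-helix" modulation — precompose the stereographic profile with $x\mapsto x(1+\eps\chi(|x|))$ for a cutoff $\chi$, or add a small multiple of an explicit mode that mimics the helical configuration announced in the abstract. Writing $\bn_\eps = \bh^{2r} + \eps\, \bm{v} + O(\eps^2)$ with $\bm{v}\in T_{\bh^{2r}}\S^2$ chosen in (or near) the kernel of $D''[\bh^{2r}]$, one expands both sides in $\eps$. The Dirichlet difference behaves like $D[\bn_\eps] - D[\bh^{2r}] = c_2\eps^2 + \cdots$ with $c_2\ge 0$ but, crucially, $c_2 = 0$ for kernel directions so the leading behavior is governed by the next order; meanwhile $\int|\boD_1^r\bn_\eps + \bn_\eps\times\boD_2^r\bn_\eps|^2 = a_2\eps^2 + \cdots$. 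Since $\bh^{2r}$ solves the full Euler–Lagrange equation \eqref{e2}, the \emph{first-order} term in $E_4[\bn_\eps]-E_4[\bh^{2r}]$ vanishes automatically, so everything reduces to showing the second-order coefficient $\tfrac{r^2}{2}a_2 + (1-r^2)c_2$ is negative for a suitable $\bm{v}$ — or, if $c_2=0$ along the chosen direction, that $a_2<0$, which would be immediate.

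The honest difficulty is therefore computing $a_2$, the Hessian of the helical functional $\bn\mapsto\int|\boD_1^r\bn + \bn\times\boD_2^r\bn|^2$ at $\bh^{2r}$, and finding a test direction $\bm{v}$ (ideally inside $\ker D''[\bh^{2r}]$, i.e. a combination of the four conformal/translation zero modes of the degree $-1$ harmonic map) on which this Hessian is \emph{negative} or at least smaller than $\tfrac{2(r^2-1)}{r^2}c_2$. I expect the cleanest route is to use the explicit stereographic/holomorphic description: write $\bn$ via $w = \tfrac{n_1+in_2}{1-n_3}$, so that $\bh^{2r}$ corresponds to $w = x_2r/\bar z$-type rational maps, reduce $D$ and $H$ to integrals of $|\partial w|^2, |\bar\partial w|^2$, compute the helical term in these coordinates (it becomes a first-order differential operator in $w$), and then the second variation is a quadratic form in a holomorphic-ish variation that can be diagonalized by Fourier/Möbius symmetry. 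The main obstacle is purely computational stamina: verifying that the mixed cross-terms in $a_2$ (between the $\rd_j$ part and the $\tfrac1r\boe_j\times$ part of $\boD_j^r$) combine to something strictly beaten by $(r^2-1)$ times a positive quantity when $r>1$ — and in particular pinning down that the threshold is exactly $r=1$ and not some larger constant, which forces one to track the constants carefully rather than argue by soft scaling.
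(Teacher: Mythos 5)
Your starting point is fine and in fact equivalent to the paper's: for $Q[\bn]=-1$ the factorization \eqref{e4} gives $E_4[\bn]-E_4[\bh^{2r}]=\frac{r^2}{2}\int|\boD_1^r\bn+\bn\times\boD_2^r\bn|^2\,dx+(1-r^2)\bigl(D[\bn]-4\pi\bigr)$, and expanding this to second order in a tangential perturbation is the same quadratic form $\frac12\inp{\boL\bxi}{\bxi}$ that the paper isolates in Lemma \ref{L1} and Proposition \ref{P2.1}. But the guiding idea you propose for producing negativity --- take $\bm{v}$ in (or near) the kernel of $D''[\bh^{2r}]$ --- cannot work, and this is where the proof is missing rather than merely laborious. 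First, $a_2\ge 0$ always: the Bogomol'nyi-type functional is a nonnegative integral vanishing at $\bh^{2r}$, so its second variation is the squared $L^2$ norm of the linearized operator applied to $\bm{v}$; your fallback ``if $c_2=0$ then $a_2<0$ would be immediate'' is impossible. Second, if $c_2=0$ (a conformal/translation zero mode of the harmonic map), the only negative term $(1-r^2)c_2$ is killed, and the second-order change is $\frac{r^2}{2}a_2\ge 0$: translations lie in the kernel of the linearized Bogomol'nyi operator as well (both $a_2$ and $c_2$ vanish, so nothing is decided at quadratic order), while dilations/rotations of the profile break the fixed length scale $1/r$ in $\boD_j^r$ and give $a_2>0$ with $c_2=0$, i.e.\ strictly the wrong sign. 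So degeneracy of $D''$ is exactly the wrong place to look, and no concrete destabilizing direction is actually produced in your proposal.

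The paper's mechanism is different and is the substantive content of the proof: after passing to the moving frame and Fourier-decomposing in the polar angle, the Hessian restricted to the modes $k=0,1$ is nonnegative for every $r$, and the instability appears only in higher modes, realized by test functions of the form $\frac{\sin\te(\rho)}{\rho}\,\xi(\la\rho)$ with $\la\to 0$, i.e.\ perturbations spread toward spatial infinity where both $a_2$ and $c_2$ are strictly positive but compete at the borderline rate $\rho^{-5}$ against $\rho^{-3}(\xi')^2$. The sign of the limiting quadratic form is then governed by the sharp constant $C_H=\tfrac14$ in the Hardy-type inequality \eqref{e3.8}, and it is the mode $k=3$ that yields negativity precisely for all $r>1$, which is how the exact threshold is pinned down; a generic test direction (and in particular anything built from the harmonic-map zero modes) would at best show instability for large $r$. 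If you want to salvage your route, you would have to carry out the diagonalization you only sketch (stereographic or moving-frame coordinates plus angular Fourier splitting) and then discover this endpoint Hardy competition --- which is the paper's proof.
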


Theorem \ref{T1} rigorously explains why 
the phase transition occurs;
%the occurrence of the phase transition 
%of the equilibrium state as observed in experiments
%, in the sense that t
the stability of skyrmions ceases to hold when the external field is weak. 
Moreover, Theorem \ref{T1}, together with D\"oring-Melcher's results, explicitly quantifies the threshold of phase transition at $r=1$. \par
For the proof of Theorem \ref{T1}, we follow the framework of \cite{LiMel18}; 
We rewrite the quadratic form of Hessian in terms of the coordinates of moving frame, and then decompose it with respect to Fourier modes of argument variable. 
To unveil unstable factors, we apply a rescaling argument by following the strategy of \cite{LamZun22}. 
Then we can find negative directions of Hessian in the Fourier modes higher than or equal to $2$. 
It is quite worth to note that as long as this strategy, the criticality of $r=1$ is shown through the third Fourier mode; namely the unstable mode can only be observed via Hessian at third Fourier mode when $r$ is close to $1+$. 
We also observe that the Hessian at $0$-th and first Fourier modes is always positive definite, regardless of $r$. This mechanism has already been observed 
in that of the Ginzburg-Landau energy \cite{LamZun22}.\par
%In fact, both the Hessian at the critical point is positive definite at $0$-th and first Fourier mode, regardless of $r$, while the instability occurs in the higher mode.\par
%
%Next we ask about 
%Then the natural question is 
Next, it is natural to ask about 
the existence of minimizer of $E_4$ in the unstable regime. We show that if $r>1$, then the energy is unbounded from below.
%We can, in fact, show that there is no
\begin{thm}\label{T2}
If $r>1$, then
$$
\min_{\substack{\bn\in\boM_4 \\ Q[\bn] =-1}} E_4 [\bn] = -\infty.
$$
\end{thm}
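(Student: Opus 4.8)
The plan is to exhibit an explicit one-parameter family $\bn_\lambda \in \boM_4$ with $Q[\bn_\lambda] = -1$ along which $E_4[\bn_\lambda] \to -\infty$ as $\lambda \to \infty$. The guiding intuition comes from the factorization \eqref{e4}: since $E_4[\bn] - 4\pi r^2 Q[\bn] = \tfrac{r^2}{2}\int |\boD_1^r \bn + \bn\times\boD_2^r \bn|^2\,dx + (1-r^2)D[\bn]$ and $1-r^2<0$ when $r>1$, the energy becomes favorable precisely for configurations that make the first (helical) term small while keeping the Dirichlet energy large. The abstract explains that the relevant competitor is ``a sort of helical configuration,'' so I would build $\bn_\lambda$ by grafting onto the skyrmion profile $\bh^{2r}$ a helical pattern — a map that locally looks like $x\mapsto$ (rotation by angle depending linearly on $x_1$, say) acting on $\boe_3$, i.e. an exact solution of $\boD_1^r \bn + \bn\times\boD_2^r \bn =0$ — spread over a spatial region of size $\sim\lambda$, smoothly cut off to coincide with $\boe_3$ outside a large ball so that the map stays in $\boM_4$ and retains degree $-1$.

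The key steps, in order: (i) Write down the helical ansatz explicitly; a natural choice is $\bm\mapsto R_{\theta}(x_1)\boe_3$ type profiles where $R$ is rotation, scaled so that $\boD_1^r \bm + \bm\times\boD_2^r\bm=0$ pointwise — this is where the precise form of $\boD_j^r$ and the choice of helical axis enters. (ii) Introduce a cutoff $\chi_\lambda$ supported in $|x|\le 2\lambda$, equal to $1$ on $|x|\le\lambda$, and define $\bn_\lambda$ by interpolating on the sphere between the helix (inside) and $\boe_3$ (outside), making sure this is done via geodesic interpolation or stereographic coordinates so $\bn_\lambda$ genuinely maps into $\S^2$. (iii) Check $\bn_\lambda\in\boM_4$ and compute $Q[\bn_\lambda]=-1$: the degree contribution comes from a fixed core region (one can keep a copy of $\bh^{2r}$ near the origin), and the helical bulk contributes zero degree since it is covered by a single chart. (iv) Estimate the three terms of $E_4$: the helical term in \eqref{e4} vanishes on the exact-helix region and is $O(1)$ (or $O(\log\lambda)$, controlled) on the transition annulus; the Dirichlet energy $D[\bn_\lambda]$ grows like $c\,\lambda^2$ because the helix has a fixed nonzero gradient density over an area $\sim\lambda^2$; hence $(1-r^2)D[\bn_\lambda]\to-\infty$ linearly in area while the error terms are lower order. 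Combining, $E_4[\bn_\lambda] = 4\pi r^2 Q[\bn_\lambda] + (1-r^2)D[\bn_\lambda] + o(D[\bn_\lambda]) \to -\infty$.

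The main obstacle is step (ii)–(iii): producing a competitor that is simultaneously (a) exactly or nearly helical on a region of area $\Theta(\lambda^2)$ so that the sign-indefinite Dirichlet term dominates, (b) globally of class $\boM_4$ with finite potential energy $V_4$ — which forces $\bn_\lambda\to\boe_3$ at infinity and therefore forces a transition layer whose energy cost must be shown to be negligible compared to $\lambda^2$ — and (c) of degree exactly $-1$. Reconciling (a) with (b) is the crux: a genuinely periodic helix does not decay, so one must either let the helical region itself have slowly-varying amplitude that tapers to $0$ (introducing gradient terms one must bound) or accept a sharp annular transition and show its Dirichlet and potential contributions are $O(\lambda)$ or $O(\lambda\log\lambda)$, hence subdominant. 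I expect the cleanest route is to choose the helix amplitude and the cutoff scales so that the transition annulus has width $o(\lambda)$ but area still $o(\lambda^2)$, making all error terms lower order than the main $-(r^2-1)\,c\,\lambda^2$ gain; verifying that the helical term from \eqref{e4} stays bounded on this annulus, using $|\boD_1^r\bn+\bn\times\boD_2^r\bn|\lesssim|\nab\bn| + |\bn-(\text{exact helix})|$, is the one genuinely technical estimate.
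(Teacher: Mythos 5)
Your overall skeleton (an explicit family of helical competitors with degree $-1$ driving $E_4\to-\infty$) is the right idea, but two of your key steps do not survive scrutiny, and together they leave the theorem unproved precisely in the critical range $1<r\lesssim 1.22$. First, the factorization \eqref{e4} is a \emph{global} identity: it is obtained by integration by parts over all of $\R^2$ using the decay of $\bn-\boe_3$ (the zeroth-order terms produced by expanding $|\boD_1^r\bn+\bn\times\boD_2^r\bn|^2$ do not match the density of $V_4$ pointwise). So you cannot restrict it to the helical bulk and relegate the rest to ``transition errors'' without justifying the boundary/cross terms. Second, your proposed ansatz --- a rotation with phase linear in $x_1$ --- does not solve $\boD_1^r\bn+\bn\times\boD_2^r\bn=0$ pointwise (the only $x_1$-dependent solutions of that equation decaying in the relevant sense are single $2\pi$ domain walls, not periodic helices), so you are forced to compute its energy density directly. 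Doing so, the optimal uniform helix has average energy density about $\tfrac12 k^2-rk+\tfrac34$, minimized at $k=r$, which is negative only for $r>\sqrt{3/2}$. Hence your construction, as described, proves unboundedness only for $r$ above a threshold strictly larger than $1$, missing the sharp statement. To reach all $r>1$ one must use the correct nonuniform profile, whose dilute limit is an isolated $2\pi$ wall rather than a harmonic helix.

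The paper's proof takes exactly that route and is much simpler than your area-scaling scheme: it sets $\bb(x)=\bh^{1/r}(x_1,0)$, a single $2\pi$ wall extended constantly in $x_2$, and computes its energy density \emph{exactly} as $2(1-r^2)/(r^2x_1^2+1)^2$, which is negative for every $r>1$ and integrable in $x_1$. The competitor $\bn_L$ equals $\bb$ on the strip $|x_2|\le L$ and equals translated copies of the skyrmion $\bh^{1/r}$ on $\{x_2>L\}$ and $\{x_2<-L\}$; this is continuous, lies in $\boM_4$, has $Q=-1$ by an explicit homotopy shrinking $L$ to $0$, and satisfies $E_4[\bn_L]=(1-r^2)C_rL+E_4[\bh^{1/r}]\to-\infty$. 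In particular the gain is linear in $L$ (per unit length of wall), no cutoff estimates or use of \eqref{e4} are needed, and the sharp threshold $r=1$ comes out of the exact density computation. If you want to salvage your approach, replace the linear-phase helix by a chiral soliton lattice (a periodic array of such $2\pi$ walls) or simply by one wall, and abandon the localized use of \eqref{e4} in favor of a direct computation of the energy density.
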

This theorem suggests that \eqref{e1} on $\boM_4$ is not well-suited to characterize 
%the emergence of helical configuration in the experiments. 
equilibrium states in the regime of weak field.
Nevertheless, it is worth mentioning that our example of the unbounded sequence 
has the same helical structure as observed in experiments of \cite{Ye12}. 
Moreover, our construction is by stretching a skyrmion in one direction, which roughly gives us the information of how the instability of skyrmions occurs. %See Remark \ref{R2} in Section \ref{S4}. 
Note that this cannot happen when the domain is bounded, as minimizers may exist as expected by experiments. This is beyond the scope of this paper.
\par
We conclude this introduction by mentioning some known related studies. 
The minimizing problem of $E_p$ is first addressed by \cite{Mel14} when $p=2$, and the analysis is extended to various settings in \cite{DorMel17, LiMel18, GusWan21}. 
Recently, the geometric interpretation of the integrand of the first term in the left hand side of \eqref{e4} is given in \cite{Sch19, BarRosSch20}, which yields a family of formal solutions to the corresponding Bogomol'nyi-type equation. 
The dynamical equation corresponding to the energy related to \eqref{e1} is also investigated with the Gilbert damping by \cite{DorMel17}, or without damping by the second author \cite{S}. 
It is worth noting that in the latter case, the equation is closely related to the nonlinear Schr\"odinger equation, and in fact, when the energy only consists of $D[\bn]$, then a sort of dispersive properties are observed \cite{GKT2, GNT, BejIonKenTat11, BejIonKenTat13}.\par
The organization of this paper is as follows. In Section \ref{S2}, we first derive the Hessian of $E_4$, then reduce the problem to its analysis. 
The main part is Section \ref{S3} where we construct an unstable direction of the Hessian, which concludes Theorem \ref{T1}. In Section \ref{S4}, we prove Theorem \ref{T2} by constructing a sequence which gives infinitely negative energy. 
In Section \ref{S5}, we prove the technical lemmas used in the main argument.
%}

%%%%%%%%%%%%%%%%%%%%%%%%%%%%%%%%%%%%%%%%%%%%%%%%%%
%%%%%%%%%%%%%%%%%%%%%%%%%%%%%%%%%%%%%%%%%%%%%%%%%%
%
%			Section 2
%
%%%%%%%%%%%%%%%%%%%%%%%%%%%%%%%%%%%%%%%%%%%%%%%%%%
%%%%%%%%%%%%%%%%%%%%%%%%%%%%%%%%%%%%%%%%%%%%%%%%%%

%{\color{blue}
\section{Hessian}\label{S2}

First of all, we observe that the difference of energy from $\bh^{2r}$ can be written as a quadratic form.

\begin{lem}\label{L1}
Let $\bn\in \boM_4$ and $\bxi := \bn- \bh^{2r}$. Then, %$\bxi\in H^1(\R^2)$ and
\begin{equation}\label{e2.1}
E_4[\bn] - E_4[\bh^{2r}] = \frac 12 \inp{\boL \bxi}{\bxi}_{L^2}
\end{equation}
where
$$
\boL \bxi := -\Del \bxi +2r\nab\times \bxi + \xi_3\boe_3 -\La (\bh^{2r}) \bxi.
$$
\end{lem}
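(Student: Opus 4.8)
\emph{Strategy.} The plan is to exploit that, once the helicity $H[\bn]=\int_{\R^2}(\bn-\boe_3)\cdot(\nab\times\bn)\,dx$ is written through $\bn$ alone, $E_4$ is an affine--quadratic functional of $\bn$ viewed as an $\R^3$-valued map; hence the expansion of $E_4[\bh^{2r}+\bxi]$ about $\bh^{2r}$ terminates \emph{exactly} at second order, with no remainder. Writing $\bh:=\bh^{2r}$, I would first record that $V_4[\bn]=\tfrac12\int_{\R^2}(1-n_3)^2\,dx$ (the $p=4$ case of the defining formula for $V_p$), a quadratic--affine functional of $n_3$. Then I would expand the three pieces of $E_4[\bh+\bxi]-E_4[\bh]$ and integrate by parts: the Dirichlet term contributes $-\int\Del\bh\cdot\bxi+\tfrac12\int|\nab\bxi|^2$; multiplying out $(\bh+\bxi-\boe_3)\cdot\nab\times(\bh+\bxi)$ and using $\nab\times\boe_3=0$ gives $H[\bh+\bxi]-H[\bh]=2\int(\nab\times\bh)\cdot\bxi+\int\bxi\cdot(\nab\times\bxi)$; and the potential contributes $-\int(1-h_3)\xi_3+\tfrac12\int\xi_3^2$. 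Collecting, the quadratic part assembles into $\tfrac12\big\langle -\Del\bxi+2r\,\nab\times\bxi+\xi_3\boe_3,\ \bxi\big\rangle_{L^2}$, while the linear part is exactly $\big\langle -\Del\bh+2r\,\nab\times\bh-(1-h_3)\boe_3,\ \bxi\big\rangle_{L^2}$.

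\emph{Disposing of the linear term.} Here criticality and the sphere constraint enter. Since $\bh=\bh^{2r}$ solves \eqref{e2}, the bracket in the linear term equals $\La(\bh)\bh$, so the linear term is $\int_{\R^2}\La(\bh)\,(\bh\cdot\bxi)\,dx$. Because both $\bn$ and $\bh$ take values in $\S^2$, expanding $|\bh+\bxi|^2=1=|\bh|^2$ pointwise gives $\bh\cdot\bxi=-\tfrac12|\bxi|^2$ a.e.; substituting turns the linear term into $-\tfrac12\int_{\R^2}\La(\bh)|\bxi|^2\,dx=-\tfrac12\langle\La(\bh)\bxi,\bxi\rangle_{L^2}$, now itself a quadratic form in $\bxi$. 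Adding this to the quadratic part above produces precisely $\tfrac12\langle\boL\bxi,\bxi\rangle_{L^2}$ with $\boL$ as in the statement, which is the asserted identity.

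\emph{Main obstacle.} The computation above is formal, and the real work is to justify that every integral converges and that the two integrations by parts leave no boundary contribution. I would do this using that $\bh$ is explicit and smooth with $|\nab\bh|,\ 1-h_3=O(|x|^{-2})$ and $|h_1|,\ |h_2|=O(|x|^{-1})$ as $|x|\to\infty$, together with the bounds built into $\boM_4$ --- namely $\nab\bxi\in L^2$, $\bxi\in L^4$, and $\xi_3=(1-h_3)-(1-n_3)\in L^2$ (since $V_4[\bn],V_4[\bh]<\infty$) --- which should control each term and let one select a sequence of radii along which the boundary terms vanish. Alternatively, one can first establish the identity for $\bxi$ in a $d$-dense class of smooth, rapidly decaying perturbations --- for instance those obtained by applying a compactly supported rotation of the target to $\bh$ --- and then pass to the limit using continuity of $E_4$ and of $\bxi\mapsto\langle\boL\bxi,\bxi\rangle_{L^2}$ on $(\boM_4,d)$. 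I expect this bookkeeping, rather than the algebraic identity itself, to be the only genuine obstacle.
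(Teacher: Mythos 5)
Your proposal is correct and follows essentially the same route as the paper: expand the exactly-quadratic functional about $\bh^{2r}$, use the Euler--Lagrange equation \eqref{e2} to identify the linear term with $\int\La(\bh^{2r})\,\bh^{2r}\cdot\bxi\,dx$, and then use the pointwise constraint $2\,\bh^{2r}\cdot\bxi+|\bxi|^2=0$ to convert it into $-\tfrac12\int\La(\bh^{2r})|\bxi|^2\,dx$. The only difference is that you spell out the expansion, integrations by parts, and integrability bookkeeping that the paper leaves implicit.
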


\begin{proof}
By the criticality of $\bh^{2r}$ for $E_4$, we have
$$
E_4[\bn] - E_4[\bh^{2r}] = 
\int \La(\bh^{2r}) \bh^{2r}\cdot \bxi
-\frac 12 \int \Del \bxi\cdot \bxi + r \int\nab\times\bxi \cdot \bxi 
+ \frac 12 \int \xi_3^2.
$$
The constraint $|\bn|=|\bh^{2r}|=1$ yields
$$
2 \bxi\cdot \bh^{2r} + |\bxi|^2 = 0.
$$
Thus
$$
\int \La(\bh^{2r}) \bh^{2r} \cdot \bxi = -\frac 12 \int \La (\bh^{2r}) |\bxi|^2,
$$
which completes the proof.
\end{proof}

Let us focus on the quadratic form defined by the right hand side of \eqref{e2.1}. %$\inp{\boL\bphi}{\bphi}$. 
%For $\bphi\in H^1(\R)$ 
%We denote
%According to \eqref{2.1}, the quadratic form
%$$
%\boH_r (\bphi) := \inp{\boL\bphi}{\bphi},\qquad \bphi \in H^1.
%$$
Now we claim that %it suffices to observe the vectors $\bphi$ tangent to $\bh^{2r}$. 
the perturbation $\bphi$ may be linearized into the tangent space.

\begin{prop}[Reduction of the theorem]\label{P2.1}
Suppose that there is $\bphi\in H^1(\R^2)$ with $\bphi \cdot \bh^{2r}=0$ such that 
$\inp{\boL \bphi}{\bphi} <0$, then for any neighborhood of $\bh^{2r}$, there exists $\bn\in \boM_4$ such that $E_4 [\bn] - E_4[\bh^{2r}]<0$. 
\end{prop}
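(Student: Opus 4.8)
The plan is to promote the infinitesimal, tangent perturbation $\bphi$ to an honest curve $t\mapsto\bn_t$ of $\S^2$-valued maps through $\bh^{2r}$, and then to Taylor-expand $E_4$ along this curve by means of Lemma \ref{L1}; its leading nontrivial term will be $\tfrac{t^2}{2}\langle\boL\bphi,\bphi\rangle<0$, which will force $E_4[\bn_t]<E_4[\bh^{2r}]$ for small $t>0$.

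First I would reduce to the case $\bphi\in C_c^\infty(\R^2;\R^3)$, still with $\bphi\cdot\bh^{2r}=0$. The quadratic form $\bphi\mapsto\langle\boL\bphi,\bphi\rangle=\int|\nab\bphi|^2+2r\int(\nab\times\bphi)\cdot\bphi+\int\phi_3^2-\int\La(\bh^{2r})|\bphi|^2$ is continuous on $H^1(\R^2;\R^3)$: this uses that $\bh^{2r}$ is smooth with bounded derivatives, so $\La(\bh^{2r})\in L^\infty$, and that $\bu\mapsto\int(\nab\times\bu)\cdot\bu$ is a bounded quadratic form on $H^1$. The constraint set $\boT:=\{\bphi\in H^1:\bphi\cdot\bh^{2r}=0\}$ is a closed subspace and it contains $C_c^\infty$ maps densely: starting from $\bphi_k\in C_c^\infty$ with $\bphi_k\to\bphi$ in $H^1$, the fiberwise projections $\bphi_k-(\bphi_k\cdot\bh^{2r})\bh^{2r}$ are again smooth and compactly supported, lie in $\boT$, and still converge to $\bphi$ in $H^1$. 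Hence the (nonempty, open) set $\{\langle\boL\cdot,\cdot\rangle<0\}\cap\boT$ meets $C_c^\infty$, so we may assume $\bphi$ is smooth and compactly supported.

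Next I would put, for small $t>0$,
$$
\bn_t:=\frac{\bh^{2r}+t\bphi}{|\bh^{2r}+t\bphi|}=\frac{\bh^{2r}+t\bphi}{\sqrt{1+t^2|\bphi|^2}},
$$
which is well-defined because $\bphi\perp\bh^{2r}$ pointwise forces the denominator to be $\ge1$, and which takes values in $\S^2$. Writing $\bxi_t:=\bn_t-\bh^{2r}$, one checks that $\bxi_t$ vanishes wherever $\bphi$ does and satisfies $|\bxi_t|\le Ct|\bphi|$, so $\bxi_t\in C_c^\infty$; consequently $\bn_t\in\boM_4$ and $d(\bn_t,\bh^{2r})\to0$ as $t\to0$. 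Setting $\bxi_t=t\bphi+\bR_t$ with $\bR_t=\bigl((1+t^2|\bphi|^2)^{-1/2}-1\bigr)(\bh^{2r}+t\bphi)$, a direct pointwise estimate gives $\|\bR_t\|_{H^1}\le Ct^2$ (this is where $\bphi$ being bounded with compactly supported gradient makes the computation immediate). Applying Lemma \ref{L1} to $\bn_t$ and expanding the quadratic form,
$$
E_4[\bn_t]-E_4[\bh^{2r}]=\tfrac12\langle\boL\bxi_t,\bxi_t\rangle=\tfrac{t^2}{2}\langle\boL\bphi,\bphi\rangle+O(t^3),
$$
where the $O(t^3)$ absorbs every term containing $\bR_t$, each controlled by the $H^1$-continuity of $\langle\boL\cdot,\cdot\rangle$ together with $\|\bR_t\|_{H^1}=O(t^2)$. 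Since $\langle\boL\bphi,\bphi\rangle<0$, the left-hand side is negative for all sufficiently small $t>0$, and $\bn_t$ lies in any prescribed neighborhood of $\bh^{2r}$ once $t$ is small; this gives the proposition.

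I expect the only genuine obstacle to be the middle step — constructing $\bn_t$ and simultaneously securing $d(\bn_t,\bh^{2r})\to0$ and $\|\bR_t\|_{H^1}=O(t^2)$ — which is exactly what motivates the preliminary reduction to $\bphi\in C_c^\infty$. For a general $\bphi\in H^1(\R^2)$ one would instead have to bound cross terms such as $\bigl\||\bphi|^2\,\nab\bh^{2r}\bigr\|_{L^2}$ and $\bigl\||\bphi|\,\nab\bphi\bigr\|_{L^2}$ by hand, using the quadratic decay of $\nab\bh^{2r}$ at infinity; the latter term in particular cannot be handled without a truncation, since $H^1$-functions in two dimensions need not be bounded. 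Everything after the construction of $\bn_t$ is a routine second-order expansion.
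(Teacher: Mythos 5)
Your proposal is correct and follows essentially the same route as the paper: normalize $\bh^{2r}+t\bphi$ to get a curve $\bn_t$ on $\S^2$, apply Lemma \ref{L1}, and read off the sign from the second-order term $t^2\langle \boL\bphi,\bphi\rangle$. The only difference is your preliminary density reduction to $\bphi\in C_0^\infty$ (via fiberwise projection onto the tangent space), which the paper bypasses by estimating $\bn_t-\bh^{2r}$ and its gradient directly for general $\bphi\in H^1$; this is a harmless technical variant, not a different argument.
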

%}

\begin{proof}
For $t>0$, let
$\bn_t := \frac{\bh^{2r} + t\bphi}{|\bh^{2r} + t\bphi|}$.
Since $|\bh^{2r} + t\bphi| = \sqrt{1+t^2|\phi|^2}$, 
$\bn_t$ is well-defined. By calculation, we have
$$
\bn_t- \bh^{2r} = 
\frac{t\bphi}{\sqrt{1+t^2|\bphi|^2}} -
\frac{t^2|\bphi|^2 \bh^{2r}}{\sqrt{1+t^2|\bphi|^2} 
\left(1+\sqrt{1+t^2|\bphi|^2} \right)},
$$
$$
\rd_j \left(\bn_t- \bh^{2r} \right)= 
\frac{t\rd_j\bphi}{\sqrt{1+t^2|\bphi|^2}}
-
\frac{t^2|\bphi|^2 \rd_j\bh^{2r}}{\sqrt{1+t^2|\bphi|^2} 
\left(1+\sqrt{1+t^2|\bphi|^2} \right)}
-
\frac{t^3 (\bphi\cdot \rd_j\bphi) \bphi}{
(1+t^2|\bphi|^2)^{3/2}},
$$
which implies $\bn_t\to \bh^{2r}$ in $\boM_4$ as $t\to 0+$. 
% which is well-defined since $\bh^{2r} + t\bphi$ never vanish by orthogonality.  
%{\color{red}
%Then $\bn_t\in \boM_4$, and 
%$\bn_t\to \bh^{2r}$ in $\boM_4$ as $t\to 0+$. 
Moreover, \eqref{e2.1} yields
$$
E_4 [\bn_t] - E_4[\bh^{2r}] =
\inp{\boL (\bn_t - \bh^{2r})}{\bn_t - \bh^{2r}} = t^2  \inp{\boL\bphi}{\bphi} + o(t^2),
$$
which is negative if $t$ is sufficiently small. This completes the proof.
%}
%Hence $\bn_t$ satisfies the desired properties if $t$ is sufficiently small.
%the conclusion follows.
%Hence if $t$ is sufficiently small, we have $\inp{\boL (\bn_t - \bh^{2r})}{\bn_t - \bh^{2r}}<0$, 
%which implies the claim by \eqref{e2.1}.
\end{proof}

%%%%%%%%%%%%%%%%%%%%%%%%%%%%%%%%%%%%%%%%%%%%%%%%%%
%%%%%%%%%%%%%%%%%%%%%%%%%%%%%%%%%%%%%%%%%%%%%%%%%%
%
%			Section 3
%
%%%%%%%%%%%%%%%%%%%%%%%%%%%%%%%%%%%%%%%%%%%%%%%%%%
%%%%%%%%%%%%%%%%%%%%%%%%%%%%%%%%%%%%%%%%%%%%%%%%%%

%{\color{blue}
\section{Proof of Theorem \ref{T1}}\label{S3}

In this section, we show Theorem \ref{T1}. By Proposition \ref{P2.1}, it suffices to find $\bphi\in H^1(\R^2)$ with 
$$
\bphi \cdot \bh^{2r} =0,\qquad \text{and}\qquad \inp{\boL \bphi}{\bphi}<0.
$$
Following \cite{LiMel18}, one can rewrite the Hessian via several steps, using moving frame, Fourier expansion, and the Hardy decomposition. 
%For the step of the construction of such $\bphi$, we

\subsection{Rescaling}

%We consider the rescaled Hessian; 
For $\bphi\in H^1(\R^2)$, we consider the $\dot{H}^1(\R^2)$ rescaling $\bphi^{2r} (\cdot) := \bphi \left( \frac{\cdot}{2r} \right)$. 
Then a simple calculation shows that the rescaled Hessian $\boH_r (\bphi) := 
\inp{\boL \bphi^{2r}}{\bphi^{2r}}$ can be written as 
%$$
%\boH_r (\bphi) := \inp{\boL \bphi^{2r}}{\bphi^{2r}},\qquad \bphi^{2r} := \bphi \left( \frac{\cdot}{2r} \right).
%$$
%By calculation we have
\begin{equation}\label{e3.1}
\boH_r (\bphi) =
\nor{\nab \bphi}{L^2}^2
+ 4r^2 \inp{\nab \times \bphi}{\bphi}_{L^2}
+ 4r^2 \nor{\phi_3}{L^2}^2  
- \int_{\R^2} \La_r (\bh) |\bphi|^2 dx
\end{equation}
with
$$
\La_r (\bh) := |\nab \bh|^2 + 4r^2 \bh\cdot \nab \times \bh - 4r^2 (1-h_3) h_3.
$$
Note that the coefficients become balanced, and the dependence of the Hessian on $r$ gets more explicit. Our goal is now to find $\bphi\in H^1(\R^2)$ with
\begin{equation}\label{e3.15}
\bphi \cdot \bh =0,\qquad \text{and}\qquad \boH_r (\bphi) <0.
\end{equation}

\subsection{Moving frame}

%Next we rewrite the Hessian in terms of particular 
Let $(\rho,\psi)$ be the polar coordinates in $\R^2$. Then by \eqref{e3}, we can write
\begin{equation}\label{e3.2}
\bh = 
\begin{pmatrix}
-\sin \psi \sin \te (\rho)\\
\cos \psi \sin \te(\rho)\\
\cos\te (\rho)
\end{pmatrix}
\end{equation}
where $\te:[0,\infty)\to \R$ is the non-decreasing function defined using
$$
\sin \te (\rho) = \frac{2\rho}{\rho^2+1},\qquad \te(0) = \pi, \qquad \te (\infty) =0.
$$
In particular, $\te$ satisfies the following relations:
%$$
%\te'' + \frac{\te'}{\rho} - \frac{\sin\te \cos\te}{\rho^2} + \frac{\sin^2\te}\rho - \sin\te (1-\cos\te)=0,
%$$
\begin{equation}\label{e3.3}
\begin{aligned}
\cos \te = \frac{\rho^2-1}{\rho^2+1},\qquad && \sin\te -\rho (1-\cos \te) =0, \\
\te' = - \frac{2}{\rho^2+1} = -\frac{\sin \te}{\rho},\qquad && 
\te'' + \frac{\te'}{\rho} - \frac{\sin\te \cos\te}{\rho^2} =0. 
\end{aligned}
\end{equation}
Based on \eqref{e3.2}, we introduce the moving frame in the tangent space at $\bh^{2r}$ as
$$
\bJ_1 := 
\begin{pmatrix}
\cos \psi\\
\sin \psi\\
0
\end{pmatrix}
,\qquad
\bJ_2 := 
\begin{pmatrix}
-\sin \psi \cos \te (\rho)\\
\cos \psi \cos \te(\rho) \\
-\sin \te(\rho)
\end{pmatrix}
.
$$
For $\bphi \in T_{\bh} \S^2$, one can write
$$
\bphi = u_1 \bJ_1 + u_2 \bJ_2,\qquad \text{and then define } u= {}^t(u_1,u_2).
$$
%}
%Then define
%$$
%\tilde{\boH}_r [u] := \boH_r [u_1\bJ_1 + u_2\bJ_2].
%$$
Let us rewrite $\boH_r (\bphi)$ in terms of $u_1$ and $u_2$ following \cite{LiMel18}. 
First note that
$$
\begin{aligned}
\rd_\rho \bh = \te' \bJ_2,\qquad && \rd_\psi \bh = - (\sin \te) \bJ_1,\qquad 
&& \rd_\rho \bJ_1 =0, \\
\rd_\psi \bJ_1 = (\cos \te) \bJ_2 + (\sin \te) \bh,\qquad && 
\rd_\rho \bJ_2 = -\te' \bh,\qquad && 
\rd_\psi \bJ_2 = -(\cos \te) \bJ_1. 
\end{aligned}
$$
%
%
%Using the relations
%$$
%\rd_\rho\bJ_1 \cdot  \bJ_2 = - \bJ_1 \cdot \rd_\rho\bJ_2 = 0,\qquad 
%\rd_\psi \bJ_1 \cdot \bJ_2 = -\bJ_1 \cdot \rd_\psi \bJ_2 = \cos \te,
%$$
Hence, each component of the integrand in \eqref{e3.1} can be reexpressed as
$$
|\nab \bphi|^2 =
|\rd_\rho \bphi|^2 + \frac{1}{\rho^2} |\rd_\psi \bphi|^2 
=
 |\nab u|^2 + \frac{2\cos\te}{\rho^2} u\times\rd_\psi u + \frac{u_1^2}{\rho^2} 
+ \left((\te')^2 + \frac{\cos^2 \te}{\rho^2} \right)u_2^2,
$$
$$
\begin{aligned}
\bphi\cdot \nab \times\bphi 
&= 
\bphi \cdot 
\left[ \bJ_1\times \rd_\rho \bphi 
+ \frac{1}{\rho} \left( (\cos\te) \bJ_2 + (\sin\te) \bh  \right) \times \rd_\psi \bphi
\right] \\
&=
-\frac{\sin\te}{\rho} u\times \rd_\psi u +  \left( \te' - \frac{\sin\te\cos\te}{\rho} \right) u_2^2,
\end{aligned}
$$
$$
\phi_3^2 = u_2^2 \sin^2\te,\qquad 
\La_r (\bh) = (\te')^2 + \frac{\sin^2\te}{\rho^2} + 4r^2 \left( \te'+ \frac{\sin\te \cos\te}{\rho} \right) - 4r^2 (1-\cos\te)\cos\te.
$$
%{\color{blue}
%$$
%\begin{aligned}
%& |\nab \bphi|^2 =
%|\rd_\rho \bphi|^2 + \frac{1}{\rho^2} |\rd_\psi \bphi|^2 
%=
% |\nab u|^2 + \frac{2\cos\te}{\rho^2} u\times\rd_\psi u + \frac{u_1^2}{\rho^2} 
%+ \left((\te')^2 + \frac{\cos^2 \te}{\rho^2} \right)u_2^2,\\
%& \bphi\cdot \nab \times\bphi 
%= 
%\bphi \cdot 
%\left[ \bJ_1\times \rd_\rho \bphi 
%+ \frac{1}{\rho} \left( (\cos\te) \bJ_2 + (\sin\te) \bh  \right) \rd_\psi \bphi
%\right]
%=
%-\frac{\sin\te}{\rho} u\times \rd_\psi u +  \left( \te' - \frac{\sin\te\cos\te}{\rho} \right) u_2^2,\\
%& \phi_3^2 = u_2^2 \sin^2\te, \\
%& \La_r (\bh) = (\te')^2 + \frac{\sin^2\te}{\rho^2} + 4r^2 \left( \te'+ \frac{\sin\te \cos\te}{\rho} \right) - 4r^2 (1-\cos\te)\cos\te.
%\end{aligned}
%$$
Hence by \eqref{e3.3}, we have
\begin{equation}\label{e3.5}
\begin{aligned}
\boH_r [\bphi] = \int_{\R^2}  |\nab u|^2 &+ \left( \frac{2\cos \te}{\rho^2} - \frac{4r^2\sin \te}{\rho} \right) 
u\times \rd_\psi u \\
&\hspace{20pt} + \left( 
- (\te')^2 + \frac{\cos^2 \te}{\rho^2} + %4r^2 
%\left( 1-\cos \te \right)
\frac{ 4r^2\sin \te}{\rho}
\right) (u_1^2 + u_2^2) dx
\end{aligned}
\end{equation}

\subsection{Fourier splitting}

Next we apply Fourier expansion of $u_j$ with respect to $\psi$:
$$
u_j (\rho,\psi) = \al_j^{(0)} (\rho) + \sum_{k=1}^\infty \left( 
\al_j^{(k)} (\rho) \cos (k\psi) + \be_j^{(k)} (\rho) \sin (k\psi)
 \right),\qquad j=1,2.
$$
Then $\boH_r [\bphi]$ can be split in the following way:
\begin{equation}\label{e3.6}
\boH_r [\bphi] = 2\pi \boH_0^r [\al_1^{(0)},\al_2^{(0)}] 
+ \pi 
\sum_{k=1}^\infty \left( \boH_k^r  [\al_1^{(k)} , \be_2^{(k)}] + 
\boH_k^r  [\be_1^{(k)} , -\al_2^{(k)}] \right)
\end{equation}
where
\begin{equation}\label{e3.7}
\begin{aligned}
\boH_k^r [\al,\be] := \int_0^\infty &\left[
(\al')^2 + (\be')^2 +
 \left( \frac{k^2}{\rho^2}  - (\te')^2 + \frac{\cos^2 \te}{\rho^2} 
+ \frac{4r^2 \sin\te}{\rho}
\right) 
(\al^2+\be^2) \right. \\
&\left. \hspace{130pt} %+ 4r^2 \frac{\sin \te}{\rho} (\al^2+\be^2)
+ 4k \left( \frac{\cos \te}{\rho^2} - \frac{2r^2\sin \te}{\rho} \right) \al\be
\right] \rho d\rho.
\end{aligned}
\end{equation}
In order to find $\bphi\in H^1(\R^2)$ satisfying \eqref{e3.15}, it suffices to show that one of $\boH^r_k$ can take negative value. 
In fact, it can be shown that $\boH^r_0$, $\boH^r_1$ are always non-negative definite for all $r>0$. (See Appendix for the proof.) Thus we need to focus only on $\boH^r_k$ with $k\ge 2$.

\subsection{Instability at higher mode}

We show the following:
\begin{prop}[Instability at higher mode]\label{L2}
For $k\ge 2$, there exists $r_{k,c}\ge 1$ such that the following holds: If $r>r_{k,c}$, then 
there exist $\al,\be\in C_0^\infty (0,\infty)$ such that 
$\boH^r_k [\al,\be] <0$. Moreover, if $k=3$, then we can take $r_{3,c} =1$.
\end{prop}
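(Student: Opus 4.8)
The plan is to exploit the explicit $\rho$-dependence of the coefficients in $\boH_k^r[\al,\be]$ from \eqref{e3.7}, and to extract a negative direction by a rescaling (concentration) argument in the spirit of \cite{LamZun22}. First I would rewrite the quadratic form more transparently. Observe from \eqref{e3.3} that $(\te')^2 = \sin^2\te/\rho^2$, so the coefficient of $\al^2+\be^2$ becomes $\frac{k^2 - \sin^2\te + \cos^2\te}{\rho^2} + \frac{4r^2\sin\te}{\rho} = \frac{k^2 - 1 + 2\cos^2\te}{\rho^2} + \frac{4r^2\sin\te}{\rho}$, and the off-diagonal term is $4k\bigl(\frac{\cos\te}{\rho^2} - \frac{2r^2\sin\te}{\rho}\bigr)\al\be$. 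The key point is that the term $\frac{4r^2\sin\te}{\rho}$ in the diagonal part and $-\frac{8kr^2\sin\te}{\rho}$ in the off-diagonal part both carry the factor $r^2$ and the \emph{same} weight $\frac{\sin\te}{\rho} = \frac{2}{\rho^2+1}$, while the remaining (positive) pieces — the gradient terms $(\al')^2 + (\be')^2$ and the $\rho^{-2}$-weighted terms — are $r$-independent. So for large $r$ one expects the sign to be governed by
$$
4r^2 \int_0^\infty \frac{\sin\te}{\rho}\,\bigl( \al^2 + \be^2 + 2k\,\al\be \bigr)\,\rho\,d\rho,
$$
and since $k\ge 2$ the quadratic form $\al^2+\be^2+2k\al\be$ is indefinite (take $\be = -\al$: it equals $2(1-k)\al^2 < 0$). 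This already suggests choosing $\be \approx -\al$ and some fixed profile, then sending $r\to\infty$; that handles the bare existence of $r_{k,c}$.

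To make this precise I would fix a bump $\al = -\be = f \in C_0^\infty(0,\infty)$ supported in a region where $\sin\te/\rho$ is bounded below, and compute
$$
\boH_k^r[f,-f] = 2\int_0^\infty (f')^2 \rho\,d\rho + 2\int_0^\infty \Bigl( \frac{k^2-1+2\cos^2\te}{\rho^2} + \frac{4r^2\sin\te}{\rho} \Bigr) f^2 \rho\,d\rho - 4k\int_0^\infty \Bigl( \frac{\cos\te}{\rho^2} - \frac{2r^2\sin\te}{\rho}\Bigr) f^2\rho\,d\rho.
$$
The $r^2$-terms collect to $8r^2(1 + 2k)\int \frac{\sin\te}{\rho} f^2\rho\,d\rho$ — wait, that is the wrong sign; with $\be = -\al$ the off-diagonal contributes $-4k(\cdots)(-f^2)$, i.e. $+4k\frac{\cos\te}{\rho^2}f^2 - 8kr^2\frac{\sin\te}{\rho}f^2$, so the $r^2$-coefficient is $8r^2(1-k)\int\frac{\sin\te}{\rho}f^2\rho\,d\rho$, which is strictly negative for $k\ge 2$ and any nonzero $f$. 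Hence $\boH_k^r[f,-f] \to -\infty$ as $r\to\infty$, so there is a threshold $r_{k,c}$, and one may as well define $r_{k,c}$ as the infimum of $r$ for which some admissible pair gives a negative value; that it is $\ge 1$ follows because the Appendix-type computations (or D\"oring–Melcher's factorization \eqref{e4}) force nonnegativity at $r\le 1$.

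The genuinely delicate claim is the \emph{sharp} statement $r_{3,c} = 1$: at $k=3$ the instability appears \emph{exactly} at $r = 1+$. For this I would not use the crude $r\to\infty$ estimate but instead analyze the limiting form at $r=1$ directly, looking for a degenerate (zero-energy) direction and then perturbing. Concretely, set $r=1$ in \eqref{e3.7} with $k=3$ and try to produce an explicit profile — built from $\sin\te$, $\cos\te$, and rational functions of $\rho$ via \eqref{e3.3} — on which $\boH_3^1$ vanishes or is made arbitrarily small; natural candidates come from the zero modes of the harmonic map / the Bogomol'nyi structure, e.g. functions like $\frac{\rho}{\rho^2+1}$ or $\frac{\rho^2-1}{\rho^2+1}$ suitably cut off, possibly with a logarithmic tail adjustment since the relevant operator is critical at infinity. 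Once a test family with $\boH_3^1 \to 0^-$ (or $=0$ in a limit) is in hand, I differentiate in $r$ at $r=1$: $\partial_r \boH_3^r[f,-f]\big|_{r=1} = 16(1-3)\int\frac{\sin\te}{\rho}f^2\rho\,d\rho < 0$, so a first-order expansion gives strict negativity for $r$ slightly above $1$, yielding $r_{3,c}=1$. The main obstacle I anticipate is precisely this borderline analysis at $k=3$, $r=1$: controlling the decay/regularity of the optimizing profile (the integrals in \eqref{e3.7} are scale-invariant in the leading terms, so the minimization is non-compact) and verifying that the degenerate direction is not killed by the $\rho^{-2}$-terms — this requires a careful Hardy-type decomposition, exactly the step flagged in the introduction as the subtle one. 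For $k\ge 4$ the same scheme works but one only gets $r_{k,c}\ge 1$, consistent with the statement.
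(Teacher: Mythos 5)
There is a genuine gap, on two levels. First, a sign error in the easy part: with the off-diagonal term $+4k\bigl(\tfrac{\cos\te}{\rho^2}-\tfrac{2r^2\sin\te}{\rho}\bigr)\al\be$ of \eqref{e3.7}, your choice $\be=-\al=f$ gives $\al\be=-f^2$, so the $r^2$-terms collect to $+8r^2(1+k)\int\sin\te\,f^2\,d\rho>0$, not $8r^2(1-k)\int\sin\te\,f^2\,d\rho$; the negative direction is $\al=\be$, for which the $r^2$-coefficient is indeed $8r^2(1-k)<0$ for $k\ge2$ (this is exactly the paper's choice). That error is trivially fixable, and with $\al=\be$ your fixed-profile, $r\to\infty$ argument does give the existence of some finite threshold for each $k\ge2$ (monotonicity in $r$ holds along $\al=\be$ directions, so the "if $r>r_{k,c}$" formulation is legitimate there).

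The serious gap is the sharp claim $r_{3,c}=1$, which is the part Theorem \ref{T1} actually needs. Your proposal for it is a plan, not a proof: you postulate a degenerate direction of $\boH^1_3$ (or a family with $\boH^1_3\to0$), defer its construction ("natural candidates ... possibly with a logarithmic tail adjustment"), and then perturb in $r$; but whether such a family exists, i.e.\ whether $\inf\boH^1_3$ over suitably normalized pairs is $0$ rather than strictly positive, \emph{is} the sharp content of $r_{3,c}=1$, and it is exactly the step you flag as the anticipated obstacle without resolving it. The paper resolves it as follows: set $\al=\be=\frac{\sin\te}{\rho}\xi$ (the substitution uses Lemma \ref{La3} with the zero mode of the $k=1$ operator $L_1$, which removes the negative bulk of the potential), note that the resulting potential satisfies $f^r_k(\rho)=-8(k-1)(8r^2-k-3)\rho^{-5}+o(\rho^{-5})$ as $\rho\to\infty$, and concentrate at spatial infinity via $\xi_\la(\rho)=\la^{-2}\xi(\la\rho)$, $\la\to0^+$; the limiting form is $\int_0^\infty\bigl[\tfrac{8}{\rho^3}(\xi')^2-\tfrac{8(k-1)(8r^2-k-3)}{\rho^5}\xi^2\bigr]d\rho$, whose sign is decided by the Hardy inequality $\int_0^\infty\rho^{-5}\xi^2\,d\rho\le\tfrac14\int_0^\infty\rho^{-3}(\xi')^2\,d\rho$ with \emph{sharp} constant $\tfrac14$. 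Negativity is therefore achievable precisely when $(k-1)(8r^2-k-3)>4$, which at $k=3$ reads $r>1$. Your proposed candidate profiles (fixed cutoffs of $\tfrac{\rho}{\rho^2+1}$, $\tfrac{\rho^2-1}{\rho^2+1}$) would not produce this: the near-optimizers are a non-compact family, $\xi\approx\rho^2$ cut off at larger and larger scales, i.e.\ concentration at $\rho=\infty$, and identifying this Hardy structure and its optimal constant is the missing key idea. As written, your argument proves instability only for sufficiently large $r$, not the threshold statement $r_{3,c}=1$.
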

%}
For the proof, we change variables in the Hessian following the idea of \cite{LiMel18}. We use the following lemma:
\begin{lem}\label{La3}
Let $A:(0,\infty)\to\R$ be nonnegative $C^1$ function, let $V\in L^1_{\loc} ((0,\infty):\R)$, and let $L= -\frac{d}{d\rho} A(\rho) \frac{d}{d\rho} + V$. 
Let $f,g\in C_0^\infty(0,\infty)$ be functions satisfying $f= \psi g$ with 
some positive smooth function $\psi:(0,\infty)\to (0,\infty)$. Then, 
$$
\int_0^\infty (Lf)f d\rho = \int_0^\infty \psi^2 A (g')^2 + 
\int_0^\infty (L\psi) \psi g^2d\rho.
$$
\end{lem}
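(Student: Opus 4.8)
The plan is to run the classical ground-state (Agmon-type) substitution. First, since $f\in C_0^\infty(0,\infty)$, integrating by parts once kills the boundary terms at $0$ and $\infty$ and gives
\[
\int_0^\infty (Lf) f\, d\rho = \int_0^\infty A (f')^2\, d\rho + \int_0^\infty V f^2\, d\rho .
\]
Every integral here is taken over the compact support of $f$, so the local integrability of $V$ and the $C^1$ regularity of $A$ make all terms finite and the manipulation legitimate.

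Next I would insert $f=\psi g$. Then $f'=\psi' g+\psi g'$, hence $(f')^2=\psi^2 (g')^2+\psi\psi' (g^2)'+(\psi')^2 g^2$, using $(g^2)'=2gg'$. The middle contribution is integrated by parts once more,
\[
\int_0^\infty A\psi\psi' (g^2)'\, d\rho = -\int_0^\infty (A\psi\psi')' g^2\, d\rho ,
\]
the boundary terms vanishing because $g$ (like $f$) has compact support in the open half-line; note $A\psi\psi'\in C^1(0,\infty)$ since $A\in C^1$ and $\psi$ is smooth, so this derivative is classical. Collecting terms and using $Vf^2=V\psi^2 g^2$,
\[
\int_0^\infty (Lf) f\, d\rho = \int_0^\infty \psi^2 A (g')^2\, d\rho + \int_0^\infty \bigl[ A(\psi')^2 - (A\psi\psi')' + V\psi^2 \bigr] g^2\, d\rho .
\]

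Finally I would identify the bracketed coefficient with $(L\psi)\psi$. Expanding $(A\psi\psi')'=(A\psi')'\psi+A(\psi')^2$ yields $A(\psi')^2-(A\psi\psi')'=-(A\psi')'\psi$, so the bracket equals $-(A\psi')'\psi+V\psi^2=(L\psi)\psi$, which is exactly the asserted identity. I do not expect a substantive obstacle: the computation is essentially routine, and the only points demanding care are that both integrations by parts carry no boundary contribution (guaranteed by $f,g\in C_0^\infty(0,\infty)$) and that $A\psi'$ and $A\psi\psi'$ are $C^1$ so that their derivatives are classical; the positivity of $\psi$ plays no role here beyond ensuring that, when the lemma is applied in reverse in the proof of Proposition \ref{L2}, $g=f/\psi$ is again an admissible smooth compactly supported test function.
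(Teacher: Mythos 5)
Your proof is correct; the paper itself gives no proof of Lemma~\ref{La3}, treating it as a routine ground-state (Jacobi-type) substitution in the spirit of \cite{LiMel18}, and your argument --- integrate by parts using compact support, substitute $f=\psi g$, integrate the cross term $A\psi\psi'(g^2)'$ by parts, and identify the resulting coefficient of $g^2$ with $(L\psi)\psi$ --- is exactly that standard computation, with the regularity of $A\psi'$ and the vanishing of boundary terms handled correctly.
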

Lemma \ref{La3} plays a role of simplification of quadratic forms $\int_0^\infty (Lf)f d\rho$, especially when $L$ has a kernel as the ground state. 
Indeed, this is the case when $k=1$, and applying Lemma \ref{La3} immediately concludes that $\boH_1^r$ is positive definite (see the proof of Proposition \ref{P3} in Appendix). 
Although $\boH_k^r$ for $k\ge 2$ does not have such kernel, 
we will apply Lemma \ref{La3} with $\psi$ being the kernel of $\boH_1^r$, 
which enables us to find the unstable factors.\bigskip\par
%
%Indeed, if a positive function $\psi$ satisfies $L\psi=0$, then Lemma \ref{La3} concludes that $\int_0^\infty (Lf)f d\rho$
%
%
%\begin{proof}
\textit{Proof of Proposition \ref{L2}}. 
%We restrict ourselves to $\al=\be$. 
First, let us set $\al=\be$. 
Then we have
$$
\begin{aligned}
\boH_k^r [\al,\al] = & 2\int_0^\infty \left[
\rho (\al')^2 + \left(
%\frac{k^2}{\rho} 
\frac{(k+\cos\te)^2}{\rho}
- \rho (\te')^2 
%+ \frac{\cos^2\te}{\rho} + 
%\frac{2k\cos\te}{\rho} 
+ 4r^2 (1-k) \sin \te
\right) \al^2
\right] d\rho \\
&= 2\int_0^\infty \left[
(L_1 \al) \al + 
\left(
\frac{k^2-1}{\rho} + \frac{2(k-1)\cos\te}{\rho} 
+4r^2 (1-k) \sin \te
\right)
\right] d\rho
\end{aligned}
$$
where $L_1:= -\frac{d}{d\rho} \rho \frac{d}{d\rho} 
+ \frac{(1+\cos\te)^2}{\rho} + 4r^2 (1-k) \sin\te$. 
Noting that $L_1 \left( \frac{\sin\te}{\rho} \right)=0$, 
we transform 
%{\color{blue}
$\al=\frac{\sin \te}{\rho} \xi$. 
Applying Lemma \ref{La3} with $A=\rho$, $\psi=\frac{\sin\te}{\rho}$, 
$V= \frac{(1+\cos\te)^2}{\rho} - \rho (\te')^2$, we have
$$
\begin{aligned}
\boH_k^r \left[\frac{\sin \te}{\rho} \xi, \frac{\sin \te}{\rho} \xi \right] 
=
\int_0^\infty 
&\left[ \frac{2\sin^2 \te}{\rho} (\xi')^2 
+ 
f^r_k (\rho) \xi^2  \right] d\rho
%
%\left(
%\frac{2(k^2-1)}{\rho^3} \sin^2 \te + \frac{(4k-4)\sin^2\te\cos\te}{\rho^3}
%+ 8r^2 \frac{\sin^3\te}{\rho^2} 
%- \frac{8kr^2\sin^3\te}{\rho^2} 
%%+4k \left(
%%\frac{\sin^2\}{}
%%\right)
%\right) \xi^2
%\right] d\rho.
\end{aligned}
$$
where
$$
f_k^r(\rho):= 2(k^2-1) \frac{\sin^2 \te}{\rho^3} 
+ 4(k-1) \frac{\sin^2\te\cos\te}{\rho^3}
+ 8 (1-k) r^2 \frac{\sin^3\te}{\rho^2}. 
%- \frac{8kr^2\sin^3\te}{\rho^2}.
$$
%As $\rho\to\infty$
Since
\begin{align*}
\sin \te = \frac{2}{\rho} + o(\rho^{-1}), &&
\cos \te = 1 + o(\rho^{-1})
\end{align*}
as $\rho\to\infty$, we have 
\begin{align*}
f_k^r (\rho) = - 8 (k-1) (8r^2 - k -3 ) \rho^{-5} + o(\rho^{-5}).
\end{align*}
Now we consider the rescaling
$$
\xi_\la (\rho) := \frac{1}{\la^2} \xi(\la\rho),\qquad \la>0.
$$
Then for $\xi\in C_0^\infty(0,\infty)$, we have
$$
\begin{aligned}
\boH_k^r \left[\frac{\sin \te}{\rho} \xi_\la, \frac{\sin \te}{\rho} \xi_\la \right]= 
\int_0^\infty 
 \left[\frac{8}{\rho^3} (\xi')^2 - \frac{8 (k-1) (8r^2 - k -3 )}{\rho^5} \xi^2 \right] d\rho + o(\la). %\left[\int_0^\infty o(\rho^{-3}) (\xi')^2 - o(\rho^{-5}) \xi^2 d\rho\right].
\end{aligned}
$$
as $\la\to 0+$. Thus we obtain
%If we take $\la\to 0+$, then
\begin{equation*}
\lim_{\la\to 0+} 
\boH_k^r \left[\frac{\sin \te}{\rho} \xi_\la, \frac{\sin \te}{\rho} \xi_\la \right]
=
\int_0^\infty \left[ \frac{8}{\rho^3} (\xi')^2 - \frac{8 (k-1) (8r^2 - k -3 )}{\rho^5} \xi^2
\right] d\rho,
\end{equation*}
which we denote $\boI^r_k [\xi]$. Hence for Lemma \ref{L2}, it suffices to show that $\boI^r_k[\xi] <0$ for some $\xi\in C_0^\infty (0,\infty)$. 
This problem is concerned with the optimization of the constant of the Hardy-type inequality:
\begin{equation}\label{e3.8}
C_{H} := \inf \left\{ 
C \ \left|\ 
\int_0^\infty \frac{\xi^2}{\rho^5} d\rho \le C \int_0^\infty \frac{(\xi')^2}{\rho^3}d\rho \quad
\text{for all } \xi\in C_0^\infty (0,\infty).\right.
\right\}.
\end{equation}
%In fact, it is known that $C_H=\frac 14$, as shown in \cite{HLP} (see also \cite{PerSam15}). 
%For reader's convenience, we will reproduce the proof in Appendix. 
%
% (see \cite{PerSam15} for example \textcolor{red}{Seems necessary to prove in appendix}). 
In fact, it is known that $C_H=\frac 14$, and thus for any $\eps>0$, there exists $\xi_\eps\in C_0^\infty (0,\infty) \setminus \{0\}$ such that
$$
\int_0^\infty \frac{\xi_\eps^2}{\rho^5} d\rho > \left( \frac 1{4 +\eps} \right) 
\int_0^\infty \frac{(\xi'_\eps)^2}{\rho^3}d\rho.
$$
This fact is shown in \cite{HLP} (see also \cite{PerSam15}), 
while in Appendix we will reproduce the proof for reader's convenience. 
Using $\xi_\eps$, we have
$$
\begin{aligned}
\boI_k^r [\xi_\eps] 
&< \int_0^\infty 
\left[\frac{8(4+\eps)}{\rho^5} \xi_\eps^2 - \frac{8 (k-1) (8r^2 - k -3 )}{\rho^5} \xi^2_\eps 
\right] d\rho\\
&= 8[4+\eps-(k-1)(8r^2-k-3)] \int_0^\infty \frac{1}{\rho^5} \xi_\eps^2 d\rho.
\end{aligned}
$$
If $k\ge 2$, then the right hand side is negative for sufficiently large $r$. 
Especially when $k=3$, it holds that
$$
\boI_3^r [\xi_\eps] < 128 \left(1-r^2 +\frac{\eps}{16}\right) \int_0^\infty \frac{1}{\rho^5} \xi_\eps^2 d\rho
$$
which is negative when $r>1$ if $\eps$ is sufficiently small. Thus the proof of Lemma \ref{L2} is complete.\qedhere
%\end{proof}

%\subsection{Conclusive step}
%Let us conclude Theorem \ref{T1}. 

\textit{Proof of Theorem \ref{T1}}. 
According to Lemma \ref{L2}, 
if $r>1$, then there exist $\al,\be\in C_0^\infty(0,\infty)$ such that 
$\boH^r_3 [\al,\be] <0$. Now define
\begin{equation}\label{e3.9}
u_1(\rho,\psi) := \al (\rho) \cos (3\psi),\qquad u_2(\rho,\psi) := -\be (\rho) \sin(3\psi).
\end{equation}
Then by \eqref{e3.5} and \eqref{e3.6}, 
$\bphi := u_1 \bJ_1 + u_2 \bJ_2$ satisfies $\boH_r[\bphi] <0$, which completes the proof by Proposition \ref{P2.1}.
%}

\begin{remark}
Note that $\al, \be$ is taken such that $\al=\be =\frac{\sin\te}{\rho}\xi $ for a specific $\xi\in C_0^\infty (0,\infty)$. %Inserting it into \eqref{e3.9}, we can write
Thus $\bphi$ in the proof can be written in the form
$$
\bphi = \frac{\sin\te(\rho)}{\rho}\xi (\rho) \cos (3\psi) \bJ_1 - 
\frac{\sin\te(\rho)}{\rho}\xi(\rho) \sin (3\psi) \bJ_2 
= \xi(\rho) \left( -\frac{\cos (3\psi)}{\rho} \rd_\psi \bh 
+\sin (3\psi) \rd_\rho \bh
 \right).
$$
\end{remark}

%%%%%%%%%%%%%%%%%%%%%%%%%%%%%%%%%%%%%%%%%%%%%%%%%%
%%%%%%%%%%%%%%%%%%%%%%%%%%%%%%%%%%%%%%%%%%%%%%%%%%
%
%			Section 4
%
%%%%%%%%%%%%%%%%%%%%%%%%%%%%%%%%%%%%%%%%%%%%%%%%%%
%%%%%%%%%%%%%%%%%%%%%%%%%%%%%%%%%%%%%%%%%%%%%%%%%%

\section{Proof of Theorem \ref{T2}}\label{S4}

Let $r>1$. In this section we construct %an unbounded sequence from below of $E_4$. 
a sequence $\{\bn_\nu\}_{\nu=1}^\infty$ with
\begin{align}\label{e4.1}
\bn_\nu \in \boM_4, && 
Q[\bn_\nu] = -1, &&
\text{and}\quad \lim_{n\to\infty} E_4 [\bn_\nu] = -\infty.
\end{align}
The key ingredient is the specific map defined as
$$
\bb (x) := 
\left(
0,\frac{2rx_1}{r^2(x_1)^2+1},
\frac{r^2(x_1)^2-1}{r^2(x_1)^2+1}
\right).
$$
Note that $\bb (x) = \bh^{1/r} (x_1,0)$ where $\bh$ is as in \eqref{e3}, and $\bb$ satisfies an equation similar to the Beltrami field:%, in the sense that
$$
\nab \times \bb = -\frac{b_2}{x_1} \bb.
$$
If we calculate the integrand of $E_4$, then
\begin{equation}\label{e4.2}
\frac 12 |\nab \bb|^2 + r (\bb -\boe_3) \cdot \nab \times \bb +\frac 12 (b_3-1)^2 
= \frac{2(1-r^2)}{(r^2x_1^2+1)^2}.
\end{equation}
Thus the energy of $\bb$ is $-\infty$ if $r>1$. 
%Our construction of the sequence $\{\bn\}$
%We will construct $\{\bn_\nu\}_{\nu=1}^\infty$ by taking cut-off 
Our construction of $\{\bn_\nu\}$ with \eqref{e4.1} is based on the cut-off of $\bb$. \par
%\bigskip\par
%
%\textit{Step 1}. Localization of $\bb$\par
Define
$$
\bn_L (x) := 
\left\{
\begin{aligned}
&\bb (x) & \text{if } |x_2|\le L,\\
&\bh^{1/r} (x_1, x_2-L) & \text{if } x_2 > L,\\
&\bh^{1/r} (x_1, x_2+L) & \text{if } x_2 < -L.
\end{aligned}
\right.
$$
Then $\bn_L\in \boM_4\cap C(\R^2)$, and we have $Q[\bn_L]=-1$ since $\bn_L$ is homotopic to $\bh^{r}$ by the homotopy with $L$ shrinking to $0$. 
Moreover, \eqref{e4.2} gives
$$
\begin{aligned}
E_4 [\bn_L] 
%&=
%\left[
%\int_{\{ |x_2|\le L\}} + \int_{\{ |x_2|> L\}}
%\right]
%\left(
%\frac 12 |\nab \bn_L|^2 + r (\bn_L -\boe_3) \cdot \nab \times \bn_L +\frac 12 (n_{L,3}-1)^2 \right)dx \\
&=
\int_{\{ |x_2|\le L\}} 
\left(\frac 12 |\nab \bb|^2 + r (\bb -\boe_3) \cdot \nab \times \bb +\frac 12 (b_3-1)^2 \right)
dx\\
&\hspace{20pt}
+ 
\int_{\{ x_2> L\}} 
\left[
\frac 12 |\nab \bh^{1/r}|^2 + r (\bh^{1/r} -\boe_3) \cdot \nab \times \bh^{1/r} +\frac 12 (h^{1/r}_3-1)^2 \right](x_1,x_2-L) dx \\
&\hspace{20pt} 
+ 
\int_{\{ x_2< -L\}} 
\left[
\frac 12 |\nab \bh^{1/r}|^2 + r (\bh^{1/r} -\boe_3) \cdot \nab \times \bh^{1/r} +\frac 12 (h^{1/r}_3-1)^2 \right] (x_1, x_2+L)dx\\
&= 
\int_{\{ |x_2|\le L\}} \frac{2(1-r^2)}{(r^2x_1^2+1)^2} dx
+ E_4[\bh^{1/r}]  = (1-r^2) C_r L +E_4 [\bh^{1/r}]
\end{aligned}
$$
where $C_r$ is positive constant independent of $L$. 
Hence we have
$$
\lim_{L\to\infty} E_4[\bn_L] =-\infty
$$
which concludes the proof of Theorem \ref{T2}.

%\begin{remark}\label{R2}
%It is worth mentioning that 
%our example of unbounded sequence is made of the skyrmion $\bh^{1/r}$ %\textit{stretched} in one direction of domain. 
%By ignoring the difference of scaling between $\bh^{1/r}$ and $\bh^{2r}$, 
%this roughly tells us how the instability of skyrmion occurs, 
%and how the helical configuration is stabilized instead, as observed in experiments \cite{Ye12}.
%\end{remark}

%%%%%%%%%%%%%%%%%%%%%%%%%%%%%%%%%%%%%%%%%%%%%%%%%%
%%%%%%%%%%%%%%%%%%%%%%%%%%%%%%%%%%%%%%%%%%%%%%%%%%
%
%			Section 5
%
%%%%%%%%%%%%%%%%%%%%%%%%%%%%%%%%%%%%%%%%%%%%%%%%%%
%%%%%%%%%%%%%%%%%%%%%%%%%%%%%%%%%%%%%%%%%%%%%%%%%%

\section{Appendix}\label{S5}

\subsection{Positivity of the Hessian at lower modes}

We show that $\boH^r_k$ defined as \eqref{e3.7} is positive definite. %More precisely,

\begin{prop}\label{P3}
For all $r\ge 0$, 
%For all $\al$, $\be\in C_0^\infty (0,\infty)$, 
we have $\boH_0^r[\al,\be]\ge 0$, $\boH_1^r[\al,\be]\ge0$ for all $\al$, $\be\in C_0^\infty (0,\infty)$.
\end{prop}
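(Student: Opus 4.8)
I would reduce both $\boH_0^r$ and $\boH_1^r$ to sums of one-dimensional Schr\"odinger-type quadratic forms of exactly the shape handled by Lemma~\ref{La3}, each one carrying an \emph{explicit positive zero mode}; Lemma~\ref{La3} then forces nonnegativity. All of the $r$-dependence that survives this reduction is carried by terms proportional to $\sin\te$, and $\sin\te=\frac{2\rho}{\rho^2+1}>0$ on $(0,\infty)$ by \eqref{e3.3}, so these terms may simply be discarded; it therefore suffices to argue at $r=0$.

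\textbf{The mode $k=0$.} Since the cross term in \eqref{e3.7} is absent, $\boH_0^r[\al,\be]=\boN_0[\al]+\boN_0[\be]$, where, using $\rho(\te')^2=\sin^2\te/\rho$, $\boN_0[f]=\int_0^\infty(L_0f)f\,d\rho+4r^2\int_0^\infty\sin\te\,f^2\,d\rho$ with $L_0:=-\frac{d}{d\rho}\rho\frac{d}{d\rho}+\frac{\cos^2\te-\sin^2\te}{\rho}$. A direct computation from \eqref{e3.3} gives $L_0(\sin\te)=0$. Writing $f=(\sin\te)g$ with $g=f/\sin\te\in C_0^\infty(0,\infty)$ and applying Lemma~\ref{La3} with $A=\rho$, $\psi=\sin\te$ yields $\int_0^\infty(L_0f)f\,d\rho=\int_0^\infty(\sin\te)^2\rho(g')^2\,d\rho\ge0$, hence $\boN_0\ge0$ and $\boH_0^r\ge0$.

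\textbf{The mode $k=1$.} The cross term is now present, so I would first diagonalize it by the orthogonal substitution $p=(\al+\be)/\sqrt2$, $q=(\al-\be)/\sqrt2$, which works because the $2\times2$ potential matrix in $(\al,\be)$ has equal diagonal entries and hence the constant eigenvectors $(1,1)$ and $(1,-1)$. Using $2\al\be=p^2-q^2$ and $(\te')^2=\sin^2\te/\rho^2$, one obtains $\boH_1^r[\al,\be]=\boN_+[p]+\boN_-[q]$ where $\boN_+[p]=\int_0^\infty\bigl(\rho(p')^2+\frac{(1+\cos\te)^2-\sin^2\te}{\rho}p^2\bigr)d\rho$ (the two $r$-terms in the $p^2$-coefficient cancel, so $\boN_+$ is $r$-independent) and $\boN_-[q]=\int_0^\infty\bigl(\rho(q')^2+\frac{(1-\cos\te)^2-\sin^2\te}{\rho}q^2\bigr)d\rho+8r^2\int_0^\infty\sin\te\,q^2\,d\rho$. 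Discarding $8r^2\sin\te\ge0$, each form is $\int_0^\infty(L_\pm h)h\,d\rho$ with $L_\pm:=-\frac{d}{d\rho}\rho\frac{d}{d\rho}+\frac{(1\pm\cos\te)^2-\sin^2\te}{\rho}$, and from \eqref{e3.3} one checks $L_+(1-\cos\te)=0$ and $L_-(1+\cos\te)=0$, with $1-\cos\te=\frac{2}{\rho^2+1}>0$ and $1+\cos\te=\frac{2\rho^2}{\rho^2+1}>0$ on $(0,\infty)$. Applying Lemma~\ref{La3} with $\psi=1-\cos\te$, respectively $\psi=1+\cos\te$, gives $\boN_+[p]\ge0$ and $\boN_-[q]\ge0$, hence $\boH_1^r\ge0$.

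\textbf{Main obstacle.} There is no genuine analytic difficulty here: because we only need nonnegativity on $C_0^\infty(0,\infty)$, every boundary term in the integrations by parts underlying Lemma~\ref{La3} vanishes automatically, and the singular behaviour of $\te$ at $0$ and $\infty$ plays no role. The work is purely bookkeeping --- guessing the three zero modes $\sin\te$, $1-\cos\te$, $1+\cos\te$ and verifying the corresponding second-order identities from \eqref{e3.3}. The step I expect to be the most error-prone is the $k=1$ diagonalization: one must carry the $\al\be$ cross term through to the $q^2$-coefficient and notice that there the two $r$-dependent contributions \emph{add up} to $8r^2\sin\te$, rather than cancelling as they do in the $p^2$-coefficient.
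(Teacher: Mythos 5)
Your proof is correct, and the identities it relies on check out: with $\rho(\te')^2=\sin^2\te/\rho$ one indeed has $L_0(\sin\te)=0$, and since $(1\pm\cos\te)^2-\sin^2\te=\pm 2\cos\te\,(1\pm\cos\te)$, a direct computation from \eqref{e3.3} confirms $L_+(1-\cos\te)=0$ and $L_-(1+\cos\te)=0$; the bookkeeping of the $r$-terms after the substitution $p=(\al+\be)/\sqrt2$, $q=(\al-\be)/\sqrt2$ (cancellation in the $p^2$-coefficient, the sum $8r^2\sin\te$ in the $q^2$-coefficient) also agrees with the paper's identity $\frac{4r^2\sin\te}{\rho}(\al-\be)^2$. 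For $k=0$ your argument coincides with the paper's. For $k=1$ the overall strategy is the same (discard the nonnegative $r$-terms, then apply Lemma~\ref{La3} with a positive zero mode), but the route differs: the paper substitutes the \emph{same} weight $\frac{\sin\te}{\rho}$ into both components, which leaves the residual cross term $-\frac{2\cos\te}{\rho}(\al-\be)^2$, and disposes of it by an integration by parts followed by completing two squares; you instead diagonalize the potential matrix first, which decouples the form into $\boN_+[p]+\boN_-[q]$ and lets you apply Lemma~\ref{La3} twice, with the two zero modes $1-\cos\te$ and $1+\cos\te$, with nothing left over. Note that by \eqref{e3.3} one has $1-\cos\te=\frac{\sin\te}{\rho}$, so your $L_+$ and its zero mode are literally the paper's $L_1$ and $\frac{\sin\te}{\rho}$; the genuinely new ingredient is the second zero mode $1+\cos\te$ of $L_-$, which is what buys you the cleaner ending (no completion of squares), at the modest cost of verifying one extra second-order identity. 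Both arguments are equally rigorous since all test functions are compactly supported in $(0,\infty)$, so the lemma applies without boundary issues in either version.
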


\begin{proof}
%Since
%$$
%\boH_0^r [\al, \be] = A_0 [\al,\be] + 4r^2 \int_0^\infty \sin\te (\al^2 +\be^2) d\rho 
%$$
Our proof essentially follows \cite{LiMel18}. 
We first see the case $k=0$:
$$
\boH_0^r [\al,\be]
=
\int_0^\infty \left[ 
(\al')^2 + (\be')^2 + 
\left( -(\te')^2 + \frac{\cos^2 \te}{\rho^2} \right) ( \al^2 + \be^2)
%\right.\\ 
%+ 
%\frac{\cos^2\te -\sin^2\te}{\rho^2} \be^2 \right. \\
%& \qquad 
%&\left. \qquad 
+4r^2 \frac{\sin \te}{\rho} (\al^2+\be^2) 
\right]
\rho d\rho.
$$
Clearly it suffices to show the case $r=0$. Now we can write
$$
\boH_0^0 [\al,\be]
= \int_0^\infty \left[ (L_0 \al ) \al + (L_0 \be ) \be\right] d\rho
%\left[\left(-\frac{d}{d\rho} \rho \frac{d}{d\rho}
%\right) \al + 
%\left( 
%-(\te')^2 + \frac{\cos^2 \te}{\rho^2}
%\right) \al
%\right] \al d\rho
%+ 
%\int_0^\infty 
%\left[\left(-\frac{d}{d\rho} \rho \frac{d}{d\rho}
%\right) \be + 
%\left( 
%-(\te')^2 + \frac{\cos^2 \te}{\rho^2}
%\right) \be
%\right] \be d\rho.
$$
with $L_0 := -\frac{d}{d\rho} \rho \frac{d}{d\rho}  
-\rho (\te')^2 + \frac{\cos^2 \te}{\rho}
$. 
Noting that $L_0(\sin \te) =0$, we transform
$$
\al = (\sin \te) \xi,\qquad \be = (\sin \te) \eta,\qquad (\xi,\eta\in C_0^\infty(0,\infty)).
$$
Applying Lemma \ref{La3} with $A=\rho$, $\psi= \sin \te$, $V= -\rho(\te')^2 + \frac{\cos^2 \te}{\rho}$, we obtain
$$
\boH_0^0[(\sin \te)\xi, (\sin \te) \eta] = \int_0^\infty \sin^2 \te [(\xi')^2 + (\eta')^2] \rho d\rho \ge 0.
$$
Next we consider the case $k=1$. We can write
\begin{align*}
\boH_1^r [\al,\be] = &
\int_0^\infty 
\left[ (\al')^2 + (\be')^2 + \left( \frac{1}{\rho^2} - (\te')^2 + \frac{\cos^2 \te}{\rho^2} \right) (\al^2+\be^2) + \frac{4\cos \te}{\rho^2} \al\be
\right.\\
&\left. 
\hspace{150pt}
+ \frac{4r^2 \sin\te}{\rho} (\al-\be)^2 \right] \rho d\rho.
\end{align*}
Thus it also suffices to show the case $r=0$. Then we can write
$$
\boH_1^0 [\al,\be] 
= 
\int_0^\infty 
\left[ (L_1 \al ) \al + (L_1 \be ) \be 
- \frac{2 \cos\te}{\rho} (\al-\be)^2 
\right] d\rho
$$
with 
$L_1 := -\frac{d}{d\rho} \rho \frac{d}{d\rho}  
+ \frac{(1 + \cos \te)^2}{\rho} - \rho (\te')^2$. 
Noting that $L_1 (\frac{\sin\te}{\rho}) =0$, we transform
$$
\al = \frac{\sin \te}{\rho} \xi,\qquad \be = \frac{\sin \te}{\rho} \eta,\qquad (\xi,\eta\in C_0^\infty(0,\infty)).
$$
Applying Lemma \ref{La3} with $A=\rho$, $\psi=\frac{\sin\te}{\rho}$, $V= \frac{(1+\cos\te)^2}{\rho} - \rho (\te')^2$, and using \eqref{e3.3}, we have
$$
\begin{aligned}
\boH_1^0 
\left[\frac{\sin \te}{\rho} \xi, \frac{\sin \te}{\rho} \eta\right]
&=
%\int_0^\infty \frac{\sin^2\te}{\rho} (\xi'^2+\eta'^2) 
%- 
%\frac{2\sin^2\te\cos\te}{\rho^3} (\xi-\eta)^2 d\rho \\
%&=
\int_0^\infty \frac{\sin^2\te}{\rho} (\xi'^2+\eta'^2) 
+ 
\frac{2\sin\te\cos\te (\te')}{\rho^2} (\xi-\eta)^2 d\rho \\
&=
\int_0^\infty \frac{\sin^2\te}{\rho} (\xi'^2+\eta'^2) 
+ 
\frac{(\sin^2\te)'}{\rho^2} (\xi-\eta)^2 d\rho \\
&= 
\int_0^\infty \frac{\sin^2\te}{\rho} (\xi'^2+\eta'^2) 
- 
\sin^2\te 
\left(
\frac{2 (\xi-\eta)(\xi'-\eta')}{\rho^2}
- \frac{2(\xi-\eta)^2}{\rho^3}
\right)
d\rho\\
&=
\int_0^\infty
\frac{\sin^2 \te}{\rho}
\left[
\left(\xi'-\frac{\xi-\eta}{\rho}\right)^2 
+ \left( \eta' + \frac{\xi-\eta}{\rho} \right)^2
\right]\ge 0.
\end{aligned}
$$
Hence the proof is complete.
\end{proof}

\subsection{The optimality of the Hardy-type inequality}

%We consider the optimization problem of \eqref{e3.8}. 
In this section we give a proof of the optimality $C_H = \frac 14$. 
(For the proof of $C_H >\frac 14$, see \cite{HLP}.) More precisely, we show the following:

\begin{lem}
For any $\eps>0$ there exists $\xi_\eps\in C_0^\infty (0,\infty) \setminus \{0\}$ such that
$$
\left( \frac 1{4 +\eps} \right) 
\int_0^\infty \frac{(\xi'_\eps)^2}{\rho^3} d\rho
< \int_0^\infty \frac{\xi_\eps^2}{\rho^5} d\rho .
$$
\end{lem}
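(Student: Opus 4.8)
The plan is to reduce the claimed inequality, by means of the substitution that exactly captures the sharp Hardy constant, to an elementary statement about $C_0^\infty$ functions on the whole line. First I would set $\xi(\rho)=\rho^2\eta(\rho)$; since multiplication by $\rho^{\pm2}$ preserves $C_0^\infty(0,\infty)$, this is a bijection of $C_0^\infty(0,\infty)$ onto itself. Expanding $\xi'=2\rho\eta+\rho^2\eta'$ and using $\int_0^\infty 4\eta\eta'\,d\rho=2\int_0^\infty(\eta^2)'\,d\rho=0$ (valid since $\eta$ has compact support in $(0,\infty)$), one obtains the identities
$$
\int_0^\infty \frac{(\xi')^2}{\rho^3}\,d\rho=\int_0^\infty \rho(\eta')^2\,d\rho+4\int_0^\infty\frac{\eta^2}{\rho}\,d\rho,\qquad \int_0^\infty\frac{\xi^2}{\rho^5}\,d\rho=\int_0^\infty\frac{\eta^2}{\rho}\,d\rho.
$$
(In passing, this already yields $C_H\le\tfrac14$.) Substituting these into the target inequality $\frac{1}{4+\eps}\int_0^\infty\rho^{-3}(\xi'_\eps)^2\,d\rho<\int_0^\infty\rho^{-5}\xi_\eps^2\,d\rho$ and simplifying, one finds it is equivalent to
$$
\int_0^\infty\rho\,(\eta'_\eps)^2\,d\rho<\eps\int_0^\infty\frac{\eta_\eps^2}{\rho}\,d\rho.
$$

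So it suffices to produce, for each $\eps>0$, a nonzero $\eta_\eps\in C_0^\infty(0,\infty)$ satisfying this last inequality, i.e. to show that the infimum of $\int_0^\infty\rho(\eta')^2\,d\rho\,/\,\int_0^\infty\rho^{-1}\eta^2\,d\rho$ over $C_0^\infty(0,\infty)\setminus\{0\}$ is zero. For this I would pass to the logarithmic variable $t=\log\rho$, writing $\eta(\rho)=w(\log\rho)$ with $w\in C_0^\infty(\R)\setminus\{0\}$; a direct change of variables gives $\int_0^\infty\rho(\eta')^2\,d\rho=\int_\R(w')^2\,dt$ and $\int_0^\infty\rho^{-1}\eta^2\,d\rho=\int_\R w^2\,dt$. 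Hence the task reduces to finding $w\in C_0^\infty(\R)\setminus\{0\}$ with $\int_\R(w')^2\,dt<\eps\int_\R w^2\,dt$, which is the standard absence-of-spectral-gap fact: fixing any bump $\chi\in C_0^\infty(\R)\setminus\{0\}$ and rescaling, $w(t):=\chi(t/R)$ satisfies $\int_\R(w')^2\,dt=R^{-1}\int_\R(\chi')^2\,dt$ and $\int_\R w^2\,dt=R\int_\R\chi^2\,dt$, so their ratio is $R^{-2}\int_\R(\chi')^2\,dt\,/\,\int_\R\chi^2\,dt\to0$ as $R\to\infty$; any large enough $R$ works. Unwinding, $\xi_\eps(\rho):=\rho^2\,\chi\!\big(\tfrac{\log\rho}{R}\big)$ is smooth, supported in a compact subset of $(0,\infty)$, and is the required test function.

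The computations involved are routine; the only real content is the conceptual observation that $\tfrac14$ is sharp but not attained precisely because, after the two substitutions, the leftover "defect" functional $\int_\R(w')^2$ controls nothing from below relative to $\int_\R w^2$ — equivalently $-\partial_t^2$ on $\R$ has spectrum $[0,\infty)$ and hence no spectral gap. Once this is recognized the near-extremizers come from the usual slow-bump rescaling, and the only points requiring a word of justification — that $\xi\mapsto\rho^{-2}\xi$ maps $C_0^\infty(0,\infty)$ into itself, that the cross term vanishes by compact support, and that the logarithmic change of variables is legitimate — are all immediate.
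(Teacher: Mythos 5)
Your proof is correct and follows essentially the same route as the paper: both substitute $\xi=\rho^2\times(\text{slowly varying factor})$, kill the cross term $\int 2(\eta^2)'\,d\rho$ by compact support, and observe that the leftover term $\int_0^\infty \rho(\eta')^2\,d\rho$ is negligible compared with the logarithmically large main term $\int_0^\infty \eta^2/\rho\,d\rho$. The only difference is cosmetic: the paper uses an explicit plateau cutoff $\chi_A$ (equal to $1$ on $[1,A]$, so $\int \chi_A^2/\rho\,d\rho\ge\log A$ while $\int\rho(\chi_A')^2\,d\rho$ stays bounded in $A$), whereas you generate the near-extremizers via the logarithmic change of variables and the dilated bump $\chi(t/R)$ --- the same mechanism in slightly different clothing, with your reformulation making the ``no spectral gap'' reason for non-attainment explicit.
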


\begin{proof}
%The fundamental idea is already given in \cite{HLP}, while our proof cares about the smoothness and support condition of 
As given in \cite{HLP}, $C_H$ is formally optimized by $\xi= \rho^{2}$. To seek 
the compactness of support, 
we take a cut-off of this function. 
Given $A>1$, 
let $\chi=\chi_A\in C_0^\infty(0,\infty)$ be a function with
%$$
%\chi (\rho ) = 1 \quad (x\in)
%$$
$\chi (\rho ) = 1$ if $\rho\in [1,A]$, $\chi(\rho)=0$ if $\rho \not\in [\frac 12 ,2A]$, 
$|\chi'(\rho)|\le \frac 2A$ for $\rho\in [A,2A]$, and 
$0\le \chi(\rho) \le 1$ for all $\rho\in (0,\infty)$.
Then define
$$
%\xi (\rho ) \equiv 
\xi_{A} (\rho) := \rho^{2} \chi_A (\rho)
$$
for $\eps>0$. Then calculation gives
$$
\int_0^\infty \frac{\xi_A^2}{\rho^5} d\rho 
= 
\int_0^\infty \frac{1}{\rho} %\left[\chi \left( \frac{\rho}{A}\right)\right]^2 
\chi_A^2(\rho) d\rho,
$$
$$
\begin{aligned}
\int_0^\infty \frac{(\xi'_A)^2}{\rho^3} d\rho 
&=
4 
\int_0^\infty 
\frac{1}{\rho} \chi_A^2 d\rho
+ 
4 \int_0^\infty \chi_A \chi_A' d\rho 
+
\int_0^\infty \rho (\chi_A')^2 d\rho \\
&=
4 \int_0^\infty 
\frac{1}{\rho} \chi_A^2 d\rho
+ \int_0^\infty \rho (\chi_A')^2 d\rho.
\end{aligned}
$$
%Let $\eps>0$ be any number. For the conclusion,
Thus it suffices to show that given $\eps>0$, there exists $A>0$ such that
\begin{equation}\label{e5.1}
%\left[ \right]
\int_0^\infty \rho (\chi_A')^2 d\rho \le 
\eps \int_0^\infty 
\frac{1}{\rho} \chi_A^2 d\rho.
\end{equation}
%with $C$ independent of $\eps$, since it gives
%$$
%\left[ \int_0^\infty \frac{\xi_\eps^2}{\rho^5} d\rho  \right]^{-1} 
%\int_0^\infty \frac{(\xi'_\eps)^2}{\rho^3} d\rho 
%\le (2-\eps)^2 +2\eps (2-\eps) + C\eps 
%\xrightarrow{\eps\to 0+} 4.
%$$
For \eqref{e5.1}, %if $\eps$ is sufficiently small, 
we can estimate as
$$
\int_0^\infty 
\frac{1}{\rho} \chi_A^2 d\rho \ge \int_1^A \frac{1}{\rho} = 
\log A,
%\frac{1}{2\eps} (1-2^{-2\eps}) \ge \frac{1}{4\eps},  
$$
$$
\int_0^\infty \rho (\chi_A')^2 d\rho 
\le C \int_{\frac 12}^1 \rho d\rho + \frac{4}{A^2} \int_A^{2A} \rho d\rho \le C 
$$
%Hence we have
%$$
%\left[ \int_0^\infty 
%\rho^{-1-2\eps} \chi^2 d\rho \right]^{-1}
%\int_0^\infty \rho^{1-2\eps} (\chi')^2 d\rho \le C\e
%$$
where $C$ is independent of $A$. Thus we have
$$
%\lim_{A\to\infty} 
\left[ \int_0^\infty 
\frac{1}{\rho} \chi_A^2 d\rho \right]^{-1}
\int_0^\infty \rho (\chi_A')^2 d\rho 
\xrightarrow{A\to\infty} 0
$$
which implies \eqref{e5.1} for sufficiently large $A$.
%Combining these, we obtain \eqref{e5.1}.
\end{proof}
%As given in \cite{HLP}, 

%\begin{acknowledgements}
\textbf{Acknowledgements} S. Ibrahim is supported by the NSERC grant No. 371637-2019. I. Shimizu is supported by JSPS KAKENHI Grant Number 19H05599. 

%\end{acknowledgements}

\noindent(S. Ibrahim): ibrahims@uvic.ca\\
Department of Mathematics and Statistics,\\ University of Victoria, Victoria, BC, Canada\\
\vspace{10pt}
\\
\noindent(I. Shimizu): i.shimizu@sigmath.es.osaka-u.ac.jp
\\Mathematical Science, Graduate School of Engineering Science,\\
Osaka University, Toyonaka, Osaka, Japan 560-0043.

\end{document}